\newtheorem{theorem}{Theorem}[section]
\newtheorem{definition}[theorem]{Definition}
\newtheorem{lemma}[theorem]{Lemma}
\newtheorem{remark}[theorem]{Remark}
\def\norm#1{\mathopen\| #1 \mathclose\|}% use instead of $\|x\|$
\newcommand{\ignore}[1]{}
\def\reals{{\mathbb R}}
\def\bzero{\mathbf{0}}
\def\bold0{\mathbf{0}}
\def\bI{\mathbf{I}}
\def\bI{\mathbf{I}}
\def\eps{\varepsilon}
\def\epsilon{\varepsilon}
\newcommand{\defeq}{\stackrel{\text{def}}{=}}
\newcommand{\braces}[1]{\left\{#1\right\}}
\newcommand{\pa}[1]{\left(#1\right)}
\newcommand{\bra}[1]{\left[#1\right]}
\newcommand{\abs}[1]{\left|#1\right|}
\DeclareMathOperator*{\argmin}{arg\,min}
\newcommand\R{\mathbb{R}}
\newcommand\Y{\mathcal{Y}}
\newcommand\X{\mathcal{X}}
\newcommand\Z{\mathcal{Z}}
\newcommand\y{\mathbf{y}}
\newcommand\inner[1]{\langle #1 \rangle}
\newcommand\grad{\nabla}
\newcommand{\comment}[1]{}
\newcommand{\ra}{\rightarrow}
\newtheorem{assumption}[theorem]{Assumption}
\newtheorem*{theorem*}{Theorem}
\newtheorem{fact}[theorem]{Fact}
\newcommand{\paren}[1]{\left(#1\right)}
\newcommand{\norme}[1]{\norm{#1}_2}
\newcommand{\HOM}{\textsc{HigherOrderMirrorProx}}
\newcommand{\BSg}{$\textsc{BinarySearch}_\gamma$}
\newcommand{\TT}{\mathcal{T}}
\title{Higher-order methods for convex-concave min-max optimization and monotone variational inequalities}
\author{Brian Bullins\\Toyota Technological Institute at Chicago\\ \texttt{bbullins@ttic.edu} \and Kevin A. Lai\\
	Georgia Institute of Technology\\
	\texttt{kevinlai@gatech.edu}}
\begin{document}

    \maketitle

\begin{abstract}%
We provide improved convergence rates for constrained convex-concave min-max problems and monotone variational inequalities with higher-order smoothness. In min-max settings where the $p^{th}$-order derivatives are Lipschitz continuous, we give an algorithm \HOM{} that achieves an iteration complexity of $O(1/T^{\frac{p+1}{2}})$ when given access to an oracle for finding a fixed point of a $p^{th}$-order equation. We give analogous rates for the weak monotone variational inequality problem. For $p>2$, our results improve upon the iteration complexity of the first-order Mirror Prox method of \cite{nemirovski2004prox} and the second-order method of \cite{monteiro2012iteration}. We further instantiate our entire algorithm in the unconstrained $p=2$ case.
\end{abstract}

\section{Introduction}
In this work, we focus on two well-studied classes of problems: monotone variational inequalities (MVIs) and convex-concave min-max problems  \citep{minty1962monotone,kinderlehrer1980introduction,nemirovski2004prox}. In an MVI, we are given a \emph{monotone} operator $F:\Z\ra \R^n$ over a convex set $\Z\subseteq \R^n$, and the goal is to find a point $z^*\in \Z$ such that
\begin{align}\label{eq:VI}
	\forall z \in \Z, \inner{F(z),z^*-z} \le 0.
\end{align}
Such a point is called a solution to a weak (Minty) MVI \citep{komlosi1999stampacchia}. The MVI problem \cref{eq:VI} is closely related to the classic min-max optimization problem:
\begin{align}\label{eq:minmax}
	\min_{x\in\X} \max_{y\in \Y} g(x,y)
\end{align}
where $g:\X\times \Y\ra \R$ is a convex-concave function over convex sets $\X$ and $\Y$. Such problems are ubiquitous in statistics, optimization, machine learning, and game theory. Solving \cref{eq:minmax} is equivalent to finding the Nash Equilibrium of a zero-sum game and is also sometimes called a \emph{saddle point problem}.

The Mirror Prox (MP) algorithm of \cite{nemirovski2004prox} is a popular method for solving both \cref{eq:VI} (when $F$ is Lipschitz continuous) and \cref{eq:minmax} (when $g$ is smooth). MP is a generalization of the extragradient algorithm of \cite{korpelevich1976extra}, and it converges in $O(1/T)$ iterations, which is tight for \emph{first-order methods} (FOMs) \citep{nemirovski1983problem}. Given that MP achieves the optimal performance for FOMs, there is a natural question of whether one can improve the iteration complexity by using \textit{higher-order methods} (HOMs), which tend to converge in fewer iterations but at the expense of higher cost per iteration.  HOMs use higher-order derivatives of the objective function and generally require \emph{higher-order smoothness}, namely that the higher-order derivatives of the objective be Lipschitz continuous. 

In convex and nonconvex optimization, while FOMs such as gradient descent are the gold standard for optimization algorithms, HOMs are useful in a variety of different settings. Newton's method is one of the most well-known HOMs, and it is a central component of path-following interior-point methods \citep{nesterov1994interior}. In cases when the higher-order update is efficiently computable, HOMs can achieve faster overall running times than FOMs. For example, HOMs have been used to find approximate local minima in nonconvex optimization faster than gradient descent \citep{agarwal2017finding,carmon2018accelerated}. While second-order methods are the most common type of HOM, there has also been significant recent work on HOMs beyond second-order methods \citep{agarwal2018lower,arjevani2018oracle, gasnikovdec2018global, jiang2018optimal, bubeck2018near, bullins2018fast}.

HOMs have seen much less study in the context of MVIs and min-max problems. \cite{monteiro2012iteration} use a second-order method with an \textit{implicit} update that achieves an improved iteration complexity of $O(1/T^{\frac{3}{2}})$ for problems with second-order smoothness. Their method uses the Hybrid Proximal Extragradient (HPE) framework established in \cite{monteiro2010complexity} and requires access to an oracle for finding a fixed point of a constrained second-order equation. However, it was unknown whether one could achieve further improved iteration complexity in the presence of third-order smoothness and beyond.

\paragraph{Contributions.}
Our main contribution is a higher-order method \HOM{} for approximately solving MVIs and convex-concave min-max problems that achieves an iteration complexity of $O(1/T^{\frac{p+1}{2}})$ for problems with $p^{th}$-order smoothness. To our knowledge, this is the first work showing that improved convergence rates are possible for problems with third-order smoothness and beyond. Our algorithm requires access to an oracle for finding a fixed point of a $p^{th}$-order equation, using a higher-order implicit update that can be thought of as a generalization of Mirror Prox. Since the implicit update may be difficult to compute in the constrained case, we show how to instantiate our algorithm in the second-order unconstrained case, giving overall running time bounds in that setting.\\

\noindent We begin by reviewing definitions, notions of convergence, and related work in \Cref{sec:prelims}. Then we summarize our main results and our algorithm in \Cref{sec:results}. In \Cref{sec:main}, we present the proof of our main result. We then show how to fully instantiate our algorithm in the unconstrained $p=2$ case in \Cref{sec:explicit}.

\section{Preliminaries}\label{sec:prelims}
We will use MVI($F,\Z$) to denote the MVI given in \cref{eq:VI} over a vector field $F:\Z\ra \R^n$ and convex constraint set $\Z\subseteq \R^n$. Unless otherwise specified, we will use $z^*$ to signify a solution to MVI($F,\Z$). Throughout the paper, we will use $\gamma_t$ to represent positive weights, and we let $\Gamma_T \defeq \sum_{t=1}^T \gamma_t$. We use $\nabla$ to denote the Jacobian operator. We use $\norm{\cdot}$ to denote an arbitrary norm and $\norm{\cdot}_*$ to denote its dual norm. We use $\norm{\cdot}_2$ to denote the Euclidean norm for vectors and the operator norm for matrices.

We use $D:\Z \times \Z\ra \R$ to denote a Bregman divergence over a distance generating function $d:\Z\ra \R$ that is 1-strongly convex with respect to some norm $\norm{\cdot}$. Recall that the definition of a Bregman divergence is as follows:
\begin{align}
D(u,v) = d(u) - d(v)  - \inner{\nabla d(v),u-v}
\end{align}
for all $u,v\in \Z$.

We now discuss several key definitions:
\begin{definition}
A vector field $F:\Z \ra \R^n$ is \emph{monotone} if $\inner{F(u)-F(v),u-v} \ge 0$ for all $u,v \in \Z$.
\end{definition}
For notational convenience, we assume our algorithms have access to a monotone operator $F$. This is the usual assumption in MVIs, but it will also allow us to solve min-max problems, as we now show. For min-max problems \cref{eq:minmax}, one can consider the \emph{gradient descent-ascent field} of $g$:
\begin{equation}\label{eq:saddlevf}
F_g(x,y) \defeq \left(\begin{array}{c}
\grad_x g(x,y)\\
-\grad_y g(x,y)
\end{array}\right)
\end{equation}
Letting $z = \begin{pmatrix}
x\\y
\end{pmatrix}$ and $\Z = \X\times \Y$, we can say $F_g$ maps $\Z$ to $\R^n$ with only a slight abuse of notation. It is then easy to show that $F_g$ is monotone when $g$ is convex-concave. So to apply our algorithms to min-max settings, we simply apply them on $F_g$.

Our algorithms will we require the following general notion of smoothness:
\begin{definition}\label{def:smooth}
A vector field $F:\Z \ra \R^n$ is $p^{th}$-order $L_p$ smooth w.r.t. $\norm{\cdot}$ if, for all $u,v \in \Z$,
\begin{equation*}
\norm{\nabla^{p-1} F(u) - \nabla^{p-1} F(v)}_* \le L_p \norm{u-v},
\end{equation*}
where we define
$\norm{\nabla^{p-1} F(u) - \nabla^{p-1} F(v)}_* \defeq \max\limits_{h : \norm{h} \leq 1} \Bigl|\nabla^{p-1} F(u) [h]^{p-1} - \nabla^{p-1} F(v) [h]^{p-1}\Bigr|.$
\end{definition}
\begin{remark}\label{rem:smooth}
\noindent Our definition of $p^{th}$-order smoothness as a property of the $(p-1)^{th}$ derivative of $F$ is motivated by the min-max setting \cref{eq:minmax}, where $F_g$ is already expressed in terms of the gradient of $g$. If $F_g$ is $p^{th}$ order smooth, this is a statement about the Lipschitz continuity of $p^{th}$ order derivatives of $g$.	
\end{remark}

Another key component of our algorithms is the \textit{$p^{th}$-order Taylor expansion} of $F$ at $u$ evaluated at $v$:
\begin{align}
\TT_p(v;u) = \sum_{i=0}^p \frac{1}{i!}\nabla^{(i)} F(u)[v-u]^i.
\end{align}
While $\TT$ depends on $F$, we leave this implicit to lighten notation, as the relevant $F$ will be clear from context.

\begin{remark}\label{rem:order}
To be consistent with Remark~\ref{rem:smooth}, when we refer to ``$p^{th}$-order methods,'' we will be referring to methods that use a $(p-1)^{th}$-order Taylor expansion of $F$ and which typically require $p^{th}$-order smoothness. Again, this indexing makes sense in the context of min-max problems, where a $p^{th}$-order method uses a Taylor expansion involving $p^{th}$-order derivatives of $g$.
\end{remark}

A well-studied consequence of Definition~\ref{def:smooth} is the following:
\begin{fact}
	Let $u,v\in \Z$, and let $F:\Z\ra \R^n$ be $p^{th}$-order $L_p$ smooth. Then,
	\begin{equation}\label{eq:smoothtaylordiff}
	\norm{F(v) - \TT_{p-1}(v;u)}_* \le \frac{L_{p}}{p!} \norm{v-u}^{p}.
	\end{equation}
\end{fact}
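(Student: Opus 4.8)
The plan is to reduce this vector-valued Taylor estimate to a one-parameter family of scalar estimates by testing against unit vectors, and then invoke Taylor's theorem with integral remainder. Fix $u,v\in\Z$, write $r=v-u$, and let $h$ be arbitrary with $\norm{h}\le 1$. First I would introduce the scalar function $\phi(t)\defeq\inner{F(u+tr),h}$ for $t\in[0,1]$. Differentiating under the pairing and using multilinearity of the derivatives of $F$ gives $\phi^{(i)}(t)=\inner{\nabla^{(i)}F(u+tr)[r]^i,h}$, so that $\phi(1)=\inner{F(v),h}$ and $\sum_{i=0}^{p-1}\tfrac{1}{i!}\phi^{(i)}(0)=\inner{\TT_{p-1}(v;u),h}$. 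Hence it suffices to bound $\bigl|\phi(1)-\sum_{i=0}^{p-1}\tfrac{1}{i!}\phi^{(i)}(0)\bigr|$ uniformly over such $h$ and then take the supremum, since $\norm{F(v)-\TT_{p-1}(v;u)}_*=\sup_{\norm{h}\le 1}\bigl|\inner{F(v)-\TT_{p-1}(v;u),h}\bigr|$.

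Next, for $p\ge 2$ (the case $p=1$ being exactly the Lipschitz bound in Definition~\ref{def:smooth}), I would apply Taylor's theorem with integral remainder to $\phi$ at order $p-1$: $\phi(1)=\sum_{i=0}^{p-2}\tfrac{1}{i!}\phi^{(i)}(0)+\tfrac{1}{(p-2)!}\int_0^1(1-t)^{p-2}\phi^{(p-1)}(t)\,dt$. Subtracting the remaining $i=p-1$ term and using the identity $\tfrac{1}{(p-1)!}=\tfrac{1}{(p-2)!}\int_0^1(1-t)^{p-2}\,dt$, the error telescopes into $\tfrac{1}{(p-2)!}\int_0^1(1-t)^{p-2}\bigl(\phi^{(p-1)}(t)-\phi^{(p-1)}(0)\bigr)\,dt$.

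It then remains to control the integrand. Since $\phi^{(p-1)}(t)-\phi^{(p-1)}(0)=\inner{\bigl(\nabla^{(p-1)}F(u+tr)-\nabla^{(p-1)}F(u)\bigr)[r]^{p-1},h}$, I would apply Hölder's inequality with $\norm{h}\le 1$, pull out the factor $\norm{r}^{p-1}$ by homogeneity of $\nabla^{(p-1)}F(\cdot)[\cdot]^{p-1}$ of degree $p-1$ in its vector argument, and then apply the $p^{th}$-order $L_p$ smoothness of $F$ with increment $\norm{tr}=t\norm{r}$, obtaining $\bigl|\phi^{(p-1)}(t)-\phi^{(p-1)}(0)\bigr|\le L_p\,t\,\norm{r}^{p}$. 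Substituting this bound and evaluating the Beta integral $\tfrac{1}{(p-2)!}\int_0^1 t(1-t)^{p-2}\,dt=\tfrac{1}{(p-2)!}\cdot\tfrac{(p-2)!}{p!}=\tfrac{1}{p!}$ yields $\bigl|\phi(1)-\sum_{i=0}^{p-1}\tfrac{1}{i!}\phi^{(i)}(0)\bigr|\le\tfrac{L_p}{p!}\norm{v-u}^{p}$, and taking the supremum over $\norm{h}\le1$ gives \cref{eq:smoothtaylordiff}.

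I do not expect a real obstacle here; this is the standard Lagrange/integral remainder argument. The only places needing care are the bookkeeping that identifies $\phi^{(i)}(0)$ with the Taylor coefficients of $F$ (immediate from multilinearity), and the homogeneity-plus-H\"older step that converts the abstract smoothness definition into the pointwise bound on the integrand — in particular keeping the powers of $\norm{v-u}$ straight. The Beta-function evaluation $\int_0^1 t(1-t)^{p-2}\,dt=B(2,p-1)=(p-2)!/p!$ is the only nonroutine computation, and it is the reason the constant comes out as $1/p!$.
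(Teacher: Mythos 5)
The paper does not prove this Fact; it is stated as ``a well-studied consequence'' of Definition~\ref{def:smooth} with no argument given, so there is nothing in-paper to compare against. Your proof is correct: the reduction to $\phi(t) = \inner{F(u+tr), h}$, the integral-remainder Taylor expansion at order $p-2$, the absorption of the $i=p-1$ term via $\tfrac{1}{(p-1)!} = \tfrac{1}{(p-2)!}\int_0^1 (1-t)^{p-2}\,dt$, the H\"older-plus-homogeneity bound $|\phi^{(p-1)}(t)-\phi^{(p-1)}(0)| \le L_p\, t\, \norm{r}^{p}$ from Definition~\ref{def:smooth}, and the Beta integral $\int_0^1 t(1-t)^{p-2}\,dt = (p-2)!/p!$ all check out, and you correctly dispose of $p=1$ as the Lipschitz base case. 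This is the standard argument one would supply here.
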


Finally, our algorithms will all require the following assumption:
\begin{assumption}\label{as:mvi-soln}
	There exists a solution $x^*\in \X$ to the weak variational inequality MVI$(F,\X)$, namely $x^*$ is a point that satisfies \cref{eq:VI}.
\end{assumption}
Assumption~\ref{as:mvi-soln} always holds when $\Z$ is a compact convex set and $F$ is continuous on $\Z$ \citep{kinderlehrer1980introduction}.

\subsection{Notions of convergence for variational inequalities}
The main solution concept for \cref{eq:VI} that we consider is an \textit{$\eps$-approximate weak solution} to MVI($F,\Z$), namely a point $z^*$ such that:
\begin{align}\label{eq:apx-weak}
\forall z \in \Z, \inner{F(z),z^*-z} \le \eps.
\end{align}

Our main bounds will be of the form: 
\begin{align}\label{eq:sum_soln}
\forall z\in \Z, \frac{1}{\Gamma_T}\sum_{t=1}^T \gamma_t \inner{F(z_t),z_t-z} \le \eps,
\end{align}
where $z_t$ are iterates produced by our algorithm, $\gamma_t$ are positive constants, and $\Gamma_T = \sum_t \gamma_t$. We now show conditions under which a guarantee of the form \cref{eq:sum_soln} gives $\eps$-approximate weak solutions.

\begin{lemma}\label{lem:minmaxavg}
	Let $F:\Z\ra \R^n$, let $z_t \in \Z$ for $t\in [T]$ be monotone, and let $\gamma_t > 0$. Let $\bar{z}_t = \frac{1}{\Gamma_T}\sum_{t=1}^T \gamma_t z_t$. Assume \cref{eq:sum_soln} holds. Then $\bar{z}_t$ is an $\eps$-approximate weak solution to MVI($F,\Z$).
\end{lemma}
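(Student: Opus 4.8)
The plan is to show that the averaged iterate $\bar z_t$ (really $\bar z_T$) satisfies the $\eps$-approximate weak solution inequality \cref{eq:apx-weak} by combining the hypothesis \cref{eq:sum_soln} with monotonicity of $F$ and convexity of the inner product in its first argument. Fix an arbitrary $z \in \Z$. The goal is to bound $\inner{F(z), \bar z_T - z}$ by $\eps$.

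\medskip

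\noindent\textbf{Step 1: Replace $F(z_t)$ by $F(z)$ using monotonicity.} By the definition of a monotone vector field, $\inner{F(z_t) - F(z), z_t - z} \ge 0$ for each $t$, i.e. $\inner{F(z_t), z_t - z} \ge \inner{F(z), z_t - z}$. Multiplying by $\gamma_t > 0$ and summing over $t \in [T]$, then dividing by $\Gamma_T$, gives
\[
\frac{1}{\Gamma_T}\sum_{t=1}^T \gamma_t \inner{F(z), z_t - z} \;\le\; \frac{1}{\Gamma_T}\sum_{t=1}^T \gamma_t \inner{F(z_t), z_t - z} \;\le\; \eps,
\]
where the last inequality is exactly the assumption \cref{eq:sum_soln}.

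\medskip

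\noindent\textbf{Step 2: Pull the average inside the linear functional.} Since $z \mapsto \inner{F(z), \cdot}$ is linear in its second slot (with $F(z)$ fixed), we have
\[
\frac{1}{\Gamma_T}\sum_{t=1}^T \gamma_t \inner{F(z), z_t - z} \;=\; \Bigl\langle F(z), \tfrac{1}{\Gamma_T}\sum_{t=1}^T \gamma_t z_t - z \Bigr\rangle \;=\; \inner{F(z), \bar z_T - z},
\]
using $\frac{1}{\Gamma_T}\sum_t \gamma_t = 1$ so that the constant term $-z$ is reproduced. Combining with Step 1 yields $\inner{F(z), \bar z_T - z} \le \eps$ for all $z \in \Z$, which is precisely \cref{eq:apx-weak}; hence $\bar z_T$ is an $\eps$-approximate weak solution. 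One should also note $\bar z_T \in \Z$ since it is a convex combination of the $z_t \in \Z$ and $\Z$ is convex.

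\medskip

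There is no real obstacle here — the lemma is essentially the standard Minty-trick averaging argument, and the only things to be careful about are (i) using monotonicity in the correct direction, and (ii) the bookkeeping that $\Gamma_T = \sum_t \gamma_t$ makes the weights a genuine probability distribution so the $-z$ term survives the averaging. (I also read the hypothesis ``let $z_t \in \Z$ \dots\ be monotone'' as shorthand for ``$F$ is monotone,'' consistent with the earlier definitions.)
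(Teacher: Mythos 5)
Your proof is correct and follows the same route as the paper: use monotonicity to replace $\inner{F(z_t),z_t-z}$ with $\inner{F(z),z_t-z}$, then use linearity of the inner product and $\sum_t \gamma_t/\Gamma_T = 1$ to recognize the weighted sum as $\inner{F(z),\bar z_T - z}$. Your version just spells out Step~2 and the membership $\bar z_T \in \Z$ a bit more explicitly; otherwise it matches the paper's argument.
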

\begin{proof} By monotonicity, we have:
	\begin{align*}
	\inner{F(z_t),z_t-z} \ge \inner{F(z),z_t-z}.
	\end{align*}
	Therefore,
	\begin{align*}
	\sum_{t=1}^T \gamma_t \inner{F(z_t),z_t-z} \ge \sum_{t=1}^T \gamma_t \inner{F(z),z_t-z} = \Gamma_T \inner{F(z), \bar{z}_t -z}.
	\end{align*}
	Then $\bar{z}$ is an $\eps$-approximate solution to the weak MVI problem.
\end{proof}

\subsection{Solving convex-concave min-max problems with variational inequalities}
The classic notion of convergence for \cref{eq:minmax} is the \emph{duality gap} $\Phi_{\X \times \Y}:\X\times \Y \ra \R$:
\begin{align}\label{eq:duality_gap}
	\Phi_{\X \times \Y}(x,y) = \max_{\hat{y}\in \Y} g(x,\hat{y}) - \min_{\hat{x} \in \X} g(\hat{x},y).
\end{align}
The duality gap is defined in terms of a min-max objective $g$, but we leave it implicit because the relevant $g$ will be clear from context. We will now show how to prove bounds on the duality gap given a bound like in \cref{eq:sum_soln}.

We will use the following lemma to prove bounds on the duality gap:
\begin{lemma}\label{lem:minmax}
	Let $F:\Z\ra \R^n$, let $z_t \in \Z$ for $t\in [T]$, and let $\gamma_t > 0$. Let $\bar{z}_t = \frac{1}{\Gamma_T}\sum_{t=1}^T \gamma_t z_t$. Assume \cref{eq:sum_soln} holds. If $F$ is the gradient descent-ascent field for a convex-concave problem (as in \cref{eq:saddlevf}), then $\Phi_{\X \times \Y}(\bar{z}_t)\le \eps$.
\end{lemma}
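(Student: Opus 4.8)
The plan is to reduce the duality-gap bound to the averaged-VI bound \cref{eq:sum_soln} by unpacking the definition of the gradient descent-ascent field $F_g$ and exploiting convexity-concavity of $g$. Write $z_t = (x_t, y_t)$ and $z = (x,y)$, and recall that $\langle F_g(z_t), z_t - z\rangle = \langle \grad_x g(x_t,y_t), x_t - x\rangle + \langle \grad_y g(x_t,y_t), y - y_t\rangle$. By convexity of $g(\cdot, y_t)$ we have $\langle \grad_x g(x_t,y_t), x_t - x\rangle \ge g(x_t,y_t) - g(x,y_t)$, and by concavity of $g(x_t, \cdot)$ we have $\langle \grad_y g(x_t,y_t), y - y_t\rangle \ge g(x_t,y) - g(x_t,y_t)$. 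Adding these, the $g(x_t,y_t)$ terms cancel, giving $\langle F_g(z_t), z_t - z\rangle \ge g(x_t,y) - g(x,y_t)$.

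Next I would average over $t$ with weights $\gamma_t/\Gamma_T$ and apply \cref{eq:sum_soln} on the left, obtaining $\eps \ge \frac{1}{\Gamma_T}\sum_t \gamma_t \bigl(g(x_t,y) - g(x,y_t)\bigr)$ for every $z = (x,y) \in \X \times \Y$. Now I use convexity-concavity once more, this time applied to Jensen's inequality in the iterates: since $g(\cdot, y)$ is convex, $\frac{1}{\Gamma_T}\sum_t \gamma_t g(x_t, y) \ge g(\bar{x}_t, y)$ where $\bar{x}_t = \frac{1}{\Gamma_T}\sum_t \gamma_t x_t$; and since $g(x, \cdot)$ is concave, $\frac{1}{\Gamma_T}\sum_t \gamma_t g(x, y_t) \le g(x, \bar{y}_t)$. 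Combining, $g(\bar{x}_t, y) - g(x, \bar{y}_t) \le \eps$ for all $(x,y)\in\X\times\Y$.

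Finally I would take the supremum over $y \in \Y$ in the first term and the infimum over $x \in \X$ in the second, which is legitimate since the inequality holds pointwise for all $(x,y)$: $\max_{\hat y\in\Y} g(\bar{x}_t, \hat y) - \min_{\hat x\in\X} g(\hat x, \bar{y}_t) \le \eps$. The left-hand side is exactly $\Phi_{\X\times\Y}(\bar{z}_t)$ by \cref{eq:duality_gap}, with $\bar{z}_t = (\bar{x}_t, \bar{y}_t) = \frac{1}{\Gamma_T}\sum_t \gamma_t z_t$, completing the proof.

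There is no real obstacle here — the argument is a standard online-learning-to-saddle-point reduction. The only points requiring minor care are: (i) confirming the directional signs match the definition \cref{eq:saddlevf}, in particular the $-\grad_y g$ component flipping the sign so that the $y$-term reads $\langle \grad_y g(x_t,y_t), y - y_t\rangle$; and (ii) noting that the two uses of convexity-concavity (gradient inequality, then Jensen) are independent and both needed. I would present this as roughly three short displayed inequalities, since \cref{lem:minmaxavg} already handles the MVI side and this lemma is its min-max analogue.
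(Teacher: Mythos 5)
Your proof is correct and follows essentially the same route as the paper: use the gradient-descent-ascent structure and the convexity/concavity gradient inequalities to get $\langle F(z_t), z_t - z\rangle \ge g(x_t,y) - g(x,y_t)$, then Jensen on the weighted average, then take worst-case $(x,y)$ to recover the duality gap. You are in fact slightly more careful than the paper's terse final chain, which elides the ``for all $(x,y)$, hence take sup/inf'' step; otherwise the arguments coincide.
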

\begin{proof}
	When $F$ is the gradient descent-ascent field for a convex-concave problem, we have:
	\begin{align*}
		\inner{F(z_t),z_t - z} &= (\inner{\nabla_x g(x_t,y_t),x_t - x} + \inner{-\nabla_y g(x_t, y_t), y_t - y})\\
		&\ge g(x_t,y_t) - g(x,y_t) + g(x_t,y) - g(x_t,y_t)\\
		&= g(x_t,y) - g(x,y_t).
	\end{align*}
	Overall, we then have:
	\begin{align*}
		\sum_{t=1}^T \gamma_t \inner{F(z_t),z_t - z} \ge \sum_{t=1}^T \gamma_t (g(x_t,y) - g(x,y_t)) \ge \Gamma_T(g(\bar{x}_t,y) - g(x,\bar{y}_t)) \ge  \Gamma_T \Phi_{\X \times \Y}(\bar{x}_t,\bar{y}_t).
	\end{align*}
\end{proof}

\subsection{Related work}
\paragraph{Monotone variational inequalities.}
The weak MVI \cref{eq:VI} is a classic and well-studied optimization problem \citep{minty1962monotone,komlosi1999stampacchia,nemirovski2004prox,monteiro2010complexity}. It is closely related to the strong MVI problem \citep{stampacchia1970variational}, where the goal is to find a $z^* \in \Z$ such that
\begin{align}\label{eq:strong_VI}
	\forall z \in \Z, \inner{F(z^*),z^*-z} \le 0.
\end{align}
When $F$ is continuous and single-valued, any solution to the weak MVI \cref{eq:VI} is a solution to the strong MVI.

Our algorithm is based on the Mirror Prox (MP) algorithm of \cite{nemirovski2004prox}, which is a generalization of the extragradient method of \cite{korpelevich1976extra}. MP is a first-order method that achieves $O(1/T)$ iteration complexity, which is tight \citep{nemirovski1983problem}. \cite{monteiro2010complexity} prove convergence rates for MP in the unconstrained case by formulating MP as an instance of what they call a Hybrid Proximal Extragradient (HPE) algorithm. \cite{monteiro2012iteration} provide a second-order algorithm to solve \cref{eq:VI} in settings with second-order smoothness. That algorithm achieves an $O(1/T^{\frac{3}{2}})$ iteration complexity, and its analysis goes through the HPE framework from \cite{monteiro2010complexity}.

\paragraph{Min-max optimization.} Many convex-concave min-max optimization problems are either solved with MP or first-order no-regret algorithms. \cite{ouyang2018lower} show a lower bound of $\Omega(1/T)$ for first-order methods in constrained smooth convex-concave saddle point problems, even in the simple case when $g(x,y)=f(x) + \inner{Ax-b,y} - h(y)$ for convex $f$ and $h$. A number of recent works have also applied second-order methods to unconstrained smooth min-max problems, where the second-order information is often accessed through Hessian-vector products \citep{balduzzi2018mechanics,gemp2018global,letcher2018stable,adolphs2018local,abernethy2019last,schafer2019competitive}.

\paragraph{Higher-order methods for convex optimization.} Higher-order methods have a long history of use in solving convex optimization problems. Assuming Lipschitz continuity of the Hessian, \cite{nesterov2008accelerating} provided an accelerated variant of the cubic regularization method~\citep{nesterov2006cubic}, which was further generalized by \cite{baes2009estimate} under $p^{th}$-order smoothness assumptions. The rate in \citep{nesterov2008accelerating} was later improved by \cite{monteiro2013accelerated}, and since then several works concerning lower bounds in this setting \citep{agarwal2018lower, arjevani2018oracle} have shown that this rate is essentially tight (up to logarithmic factors) when the Hessian is Lipschitz continuous. Recently, several works have shown that the lower bound is also essentially tight for $p > 2$ \citep{gasnikovdec2018global, jiang2018optimal, bubeck2018near, bullins2018fast}, leading to advances in related problems, such as $\ell_\infty$ regression \citep{bullins2019higher} and parallel non-smooth convex optimization \citep{bubeck2019complexity}.

\section{Main results}\label{sec:results}
Our main result is a new higher-order method \HOM{} (\Cref{alg:main}) for solving MVIs and convex-concave min-max problems with higher-order smoothness. We prove the following convergence rate:
\begin{theorem}
Suppose $F:\Z\ra \R^n$ is $p^{th}$-order $L_p$-smooth. Let $R \defeq \max\limits_{z\in \Z} D(z, z_1)$. Moreover,\\ let $\eps = \frac{16L_{p}}{p!}\pa{\frac{R}{T}}^{\frac{p+1}{2}}$. Then for $\bar{z}_T$ as output by \Cref{alg:main}:
\begin{enumerate}
	\item If $F$ is monotone, then $\bar{z}_T$ is an $\eps$-approximate solution to the weak MVI problem.
	\item If $F$ is the gradient descent-ascent field for a convex-concave problem over $\X$ and $\Y$, then $\Phi_{\X \times \Y}(\bar{z}_t)\le \eps$.
\end{enumerate}
\end{theorem}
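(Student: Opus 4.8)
The plan is to reduce both conclusions to a single bound of the form \cref{eq:sum_soln}. Write $w_t$ for the extrapolated (half-step) iterate produced in round $t$ of \Cref{alg:main}, $z_t$ for the anchor iterate, and $\gamma_t>0$ for the round-$t$ step size, so that the returned point is $\bar z_T=\frac1{\Gamma_T}\sum_{t=1}^T\gamma_t w_t$. I will establish
\begin{align*}
\forall z\in\Z,\qquad \frac1{\Gamma_T}\sum_{t=1}^T\gamma_t\inner{F(w_t),w_t-z}\ \le\ \frac{R}{\Gamma_T},
\end{align*}
together with the step-count bound $\Gamma_T\gtrsim\frac{p!}{L_p}\,T^{\frac{p+1}{2}}R^{-\frac{p-1}{2}}$; combining the two yields $\frac{R}{\Gamma_T}\le\eps=\frac{16L_p}{p!}\big(\tfrac RT\big)^{\frac{p+1}{2}}$ after bookkeeping of the absolute constants, and then conclusion~1 follows from Lemma~\ref{lem:minmaxavg} and conclusion~2 from Lemma~\ref{lem:minmax}, each applied with their ``$z_t$'' taken to be our $w_t$.

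For the per-round estimate I would view round $t$ as two Bregman-proximal steps anchored at $z_t$: the implicit \emph{prediction} step, which defines $w_t$ as a fixed point of $w\mapsto\argmin_{z\in\Z}\{\gamma_t\inner{\TT_{p-1}(w;z_t),z}+D(z,z_t)\}$ and is exactly where the $p^{th}$-order fixed-point oracle is invoked, followed by the \emph{correction} step $z_{t+1}=\argmin_{z\in\Z}\{\gamma_t\inner{F(w_t),z}+D(z,z_t)\}$. Applying the standard three-point inequality to the prediction step evaluated at $z_{t+1}$ and to the correction step evaluated at an arbitrary $u\in\Z$, summing, and cancelling the common $D(z_{t+1},z_t)$ terms gives
\begin{align*}
\gamma_t\inner{F(w_t),w_t-u}\ \le\ D(u,z_t)-D(u,z_{t+1})+\gamma_t\inner{F(w_t)-\TT_{p-1}(w_t;z_t),\,w_t-z_{t+1}}-D(z_{t+1},w_t)-D(w_t,z_t).
\end{align*}
Cauchy--Schwarz and the smoothness Fact \cref{eq:smoothtaylordiff} bound the middle term by $\gamma_t\frac{L_p}{p!}\norm{w_t-z_t}^p\norm{w_t-z_{t+1}}$, and Young's inequality together with $1$-strong convexity of $d$ (so $D(z_{t+1},w_t)\ge\frac12\norm{w_t-z_{t+1}}^2$ and $D(w_t,z_t)\ge\frac12\norm{w_t-z_t}^2$) collapses the right-hand side to $\frac12(\alpha_t^2-1)\norm{w_t-z_t}^2+D(u,z_t)-D(u,z_{t+1})$ with $\alpha_t\defeq\gamma_t\frac{L_p}{p!}\norm{w_t-z_t}^{p-1}$. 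Since the algorithm (through \BSg{}) chooses $\gamma_t$ so that $\alpha_t$ lies in a fixed closed sub-interval of $(0,1)$, the quadratic term is $\le -c_0\norm{w_t-z_t}^2$ for an absolute $c_0>0$; telescoping and $D(u,z_{T+1})\ge0$ then give $\sum_t\gamma_t\inner{F(w_t),w_t-u}\le D(u,z_1)\le R$, which is the first display.

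To lower-bound $\Gamma_T$, I would specialize $u=z^*$, a weak-MVI solution guaranteed by Assumption~\ref{as:mvi-soln}, so that $\inner{F(w_t),w_t-z^*}\ge0$ and the same telescoped inequality forces $\sum_{t=1}^T\norm{w_t-z_t}^2\le R/c_0$. The lower end of the step-size interval gives $\gamma_t\gtrsim\frac{p!}{L_p}\norm{w_t-z_t}^{-(p-1)}$, and applying Jensen's inequality to the convex map $s\mapsto s^{-(p-1)/2}$ on the numbers $\norm{w_t-z_t}^2$ yields $\Gamma_T\gtrsim\frac{p!}{L_p}\,T^{\frac{p+1}{2}}\big(\sum_t\norm{w_t-z_t}^2\big)^{-\frac{p-1}{2}}\gtrsim\frac{p!}{L_p}\,T^{\frac{p+1}{2}}R^{-\frac{p-1}{2}}$, the missing ingredient. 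The step I expect to be the genuine obstacle is showing each round is well defined: that there exists a step size $\gamma_t$ for which the fixed point $w_t$ exists \emph{and} $\alpha_t=\gamma_t\frac{L_p}{p!}\norm{w_t-z_t}^{p-1}$ lands in the prescribed interval. The oracle returns $w_t$ for a given $\gamma_t$, but $\norm{w_t-z_t}$ depends on $\gamma_t$ implicitly and non-linearly, so one must argue that $\gamma\mapsto\alpha_t(\gamma)$ is continuous, vanishes as $\gamma\to0$, and grows past the upper threshold, so that the bisection \BSg{} terminates inside the interval; the degenerate case $w_t=z_t$ (already stationary, any $\gamma_t$ admissible and only helpful for $\Gamma_T$) has to be carved out separately. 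Everything else --- the three-point inequality, the Young/strong-convexity accounting, and the Jensen step --- is routine once this step-size selection is justified.
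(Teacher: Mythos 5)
Your proposal is correct and follows essentially the same route as the paper: both reduce the two conclusions to the single averaged bound \cref{eq:sum_soln} via Lemmas~\ref{lem:minmaxavg} and~\ref{lem:minmax}, establish that bound by applying the three\nobreakdash-point inequality (Lemma~\ref{lem:bregmanineq}) to the prediction and correction steps, control the Taylor error by H\"older/Cauchy--Schwarz plus \cref{eq:smoothtaylordiff}, absorb it via Young and $1$\nobreakdash-strong convexity together with the upper step-size condition in \eqref{eq:implicitzhat2}, telescope, and then lower-bound $\Gamma_T$ by specializing $u=z^*$ to extract $\sum_t\norm{\hat z_t-z_t}^2\lesssim D(z^*,z_1)$ and feeding this through the lower step-size condition and a power-mean/Jensen step --- exactly the content of Lemmas~\ref{lem:higher-order_mp}, \ref{lem:apply_power_mean}, and~\ref{thm:mainho}. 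The only cosmetic differences are your introduction of the dimensionless ratio $\alpha_t$ (the paper works with the raw inequality $\gamma_t\le\frac{p!}{16L_p\norm{\hat z_t-z_t}^{p-1}}$ and retains both $\tfrac14\norm{\hat z_t-z_t}^2$ and $\tfrac14\norm{z_{t+1}-\hat z_t}^2$ on the left of \eqref{eq:term_to_sum}) and your direct use of Jensen on $s\mapsto s^{-(p-1)/2}$ in place of the paper's power-mean Lemma~\ref{lem:apply_power_mean}; these are equivalent. Your closing concern about well-definedness of the step-size selection is a fair observation, but the theorem is stated conditionally on Algorithm~\ref{alg:main} returning iterates satisfying \eqref{eq:implicitzhat1}--\eqref{eq:implicitzhat2}; the paper only addresses existence of a suitable $\gamma_t$ concretely in the $p=2$ instantiation (Section~\ref{sec:explicit}), so that issue lies outside the scope of the proof of Theorem~\ref{thm:main} itself.
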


Our result matches the rate of \cite{monteiro2012iteration} when $p=2$ and gives improved convergence rates for higher $p$. To our knowledge, this is the first algorithm to achieve improved iteration complexity in the presence of higher-order smoothness. We compare our algorithm to that of \cite{monteiro2012iteration} in more detail in \Cref{sec:compare_alg}.

Similar to other higher-order algorithms, which require an oracle for solving a minimization over a $p^{th}$ order Taylor series \citep{gasnikovdec2018global, jiang2018optimal, bubeck2018near}, our algorithm requires an oracle for solving a fixed point problem of a $p^{th}$ order equation. While this oracle is stronger, we believe it is justified given that the MVI and convex-concave min-max settings are significantly more difficult compared to convex minimization problems. A common downside of higher-order algorithms is that the required oracle may be difficult to compute, particularly in the constrained setting. We can also consider running our algorithm in the unconstrained setting, which requires a slightly weaker unconstrained oracle rather than a constrained oracle. We discuss how to interpret our bounds in the unconstrained setting in \Cref{sec:unconstrained}.

Finally, we show how to instantiate our method in the second-order unconstrained case, giving the following running time bounds:
\begin{theorem}[Main theorem, $p=2$ (Informal)] Suppose $F:\R^n\ra \R^n$ is sufficiently smooth, and let $\braces{(\hat{z}_t, \gamma_t)}_{t\in[T]}$ be the output of \HOM{} $(p=2)\ +$ \BSg{} (Algorithm \ref{alg:fomp}). Then, for $\Gamma_T \defeq \sum\limits_{t=1}^T \gamma_t$, the iterates $\braces{\hat{z}_t}_{t\in[T]}$ satisfy, for all $z \in \reals^n$,
\begin{equation}
\frac{1}{\Gamma_T}\sum\limits_{t=1}^T \inner{\gamma_t F(\hat{z}_t), \hat{z}_t - z} \leq 8L_{2}\pa{\frac{\max\braces{D(z,z_1), 1}}{T}}^{\frac{3}{2}},
\end{equation}
with per-iteration cost dominated by $\tilde{O}(1)$ matrix inversions.\footnote{Here we use the $\tilde{O}(\cdot)$ notation to suppress logarithmic factors.}
\end{theorem}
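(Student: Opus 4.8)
The plan is to prove this as a corollary of the general Theorem in \Cref{sec:results} (the $O(1/T^{(p+1)/2})$ rate) specialized to $p=2$, combined with a concrete implementation of the $p=2$ oracle. First I would recall that the general \HOM{} algorithm, at each step $t$, needs to produce a pair $(\hat z_t, \gamma_t)$ solving a fixed-point equation of the form $\hat z_t = \argmin_{z\in\Z}\{\gamma_t\inner{\TT_{p-1}(\hat z_t; z_t'), z} + D(z, z_t')\}$ (the higher-order implicit "Mirror Prox" update), together with a step-size condition relating $\gamma_t$ to $\norm{\hat z_t - z_t'}^{p-1}$; for $p=2$ this is a regularized equation involving the linearization $F(z_t') + \nabla F(z_t')[\hat z_t - z_t']$ of $F$. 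The body of the general proof (which I am allowed to invoke) then gives the bound $\frac{1}{\Gamma_T}\sum_t \gamma_t \inner{F(\hat z_t), \hat z_t - z} \le \frac{c\, L_p}{p!}(R/T)^{(p+1)/2}$ for an appropriate absolute constant $c$; plugging $p=2$ gives the exponent $3/2$ and constant of the stated order, and replacing $R = \max_z D(z,z_1)$ by $\max\{D(z,z_1),1\}$ (pointwise in $z$) is a harmless weakening that also removes the need for $\Z$ to be compact, which is what we want in the unconstrained case.

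Second, I would describe the implementation that realizes the oracle and the step-size search. In the unconstrained $p=2$ case the implicit update reduces to solving, for the unknown pair $(\hat z, \gamma)$,
\begin{equation*}
\hat z = z_t' - \gamma\, \nabla d(\cdot)\text{-adjusted}\Big(F(z_t') + \nabla F(z_t')[\hat z - z_t']\Big),
\end{equation*}
i.e. a \emph{linear} system in $\hat z$ once $\gamma$ is fixed, whose solution costs one matrix inversion of an $n\times n$ matrix of the form $(\text{something} + \gamma\nabla F(z_t'))$. The step-size condition is a scalar monotonicity condition in $\gamma$ — as $\gamma$ grows, $\norm{\hat z(\gamma) - z_t'}$ grows — so \BSg{} performs a binary search over $\gamma$ in a bounded range (using smoothness of $F$ to bound the range above and below), each probe costing one matrix inversion; this yields $\tilde O(1)$ inversions per outer iteration, where the $\tilde O$ hides the $\log(1/\text{tolerance})$ from the binary search and from the condition number bounds implied by the smoothness hypotheses. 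I would also note that an approximate solution to the fixed-point equation (which is all binary search delivers) suffices: the general analysis is robust to a controlled error in the implicit update, changing the constant $16$ to $8$ only via the choice of tolerance, so one must track that the accumulated oracle error is dominated by the main term.

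Third, I would assemble the pieces: instantiate the general theorem with $p=2$, verify that Algorithm~\ref{alg:fomp} (\HOM{} $p=2$ $+$ \BSg{}) produces iterates meeting the (approximate) oracle spec with error small enough to keep the constant at $8$, and conclude the displayed bound with per-iteration cost $\tilde O(1)$ matrix inversions. The main obstacle I expect is \emph{not} the rate — that is inherited from the general theorem — but rather the careful bookkeeping in the binary search step: one must show (i) that the scalar map $\gamma \mapsto \norm{\hat z(\gamma)-z_t'}$ is well-behaved (continuous, and monotone enough) so that binary search converges geometrically, (ii) that the smoothness assumptions ("sufficiently smooth," which the formal version would make precise — Lipschitz $\nabla F$, and likely a local bound controlling the linear systems' conditioning) guarantee the search interval for $\gamma$ has $\mathrm{poly}$ width and the matrices being inverted are well-conditioned, and (iii) that the resulting approximate update still satisfies the inequality the general proof needs, with explicit constants. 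Handling the unconstrained-ness (so that $R$ can be replaced by $\max\{D(z,z_1),1\}$ uniformly in $z$ without compactness) is a minor point that I would address by carrying the bound for each fixed $z$ separately, as the general proof already does.
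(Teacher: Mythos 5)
Your high-level plan — inherit the $O(1/T^{3/2})$ rate from Lemma~\ref{thm:mainho} at $p=2$, reduce the $p=2$ implicit update to a linear system, and implement the step-size search by binary search — matches the paper's structure. However, several specific claims in your sketch are wrong or gloss over genuine work the paper does, and they would need fixing before the argument closes.

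First, the binary search: you assert that $\gamma \mapsto \norm{\hat z_t(\gamma)-z_t}_2$ is monotone (even hedged as "monotone enough") and that this is what makes \BSg{} converge. The paper does not prove or use monotonicity of this map, and it is not obvious that it holds. Instead, Lemma~\ref{lem:lipschitz-q} establishes a \emph{Lipschitz} bound on $q(\gamma) = \frac{1}{12\norm{\hat z_t(\gamma)-z_t}_2}$, and Lemma~\ref{lem:bsearch} runs a root-finding binary search maintaining the invariant $\gamma_- \le q(\gamma_-)$, $\gamma_+ \ge q(\gamma_+)$. At termination $\abs{\gamma_+-\gamma_-}$ is small, and the Lipschitz bound on $q$ is what lets one conclude $\gamma_+ \le \frac{3}{2}q(\gamma_+)$, i.e.\ that \eqref{eq:lipschitzp2} holds. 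Without a substitute for that Lipschitz estimate, the convergence of the search is not established.

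Second, you suggest the constant $8$ (vs.\ $16$ in Lemma~\ref{thm:mainho}) reflects a tolerance budget for an inexact oracle. It does not: for $p=2$ one has $16L_p/p! = 16L_2/2! = 8L_2$, and the binary search returns a $\gamma_t$ satisfying condition \eqref{eq:lipschitzp2} \emph{exactly}, not approximately. Relatedly, your treatment of $\max\{D(z,z_1),1\}$ as a "harmless weakening" misses its actual purpose: the proof splits into the case where \BSg{} is never invoked (either $\gamma_+ < \frac{1}{8\norm{\hat z_t(\gamma_+)-z_t}_2}$ or $\gamma_- \ge \gamma_+$), in which case the algorithm sets $\gamma_t = T^{3/2}$ so $\Gamma_T \ge T^{3/2}$, and one needs $\max\{D(z,z_1),1\}/T^{3/2} \le (\max\{D(z,z_1),1\}/T)^{3/2}$, which fails without the $\max$ with $1$.

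Finally, the $\tilde O(1)$ inversion count is not automatic from "sufficiently smooth." The paper's bound $N = O\bigl(\log(L_1\norme{z_1-z^*}T/\mu)\bigr)$ relies on (i) Assumption~\ref{assum:invert}, which ensures $\bI + \gamma\nabla F(z_t)$ is invertible and controls $\sigma_{\min}(\gamma^{-1}\bI+\nabla F(z_t))$; (ii) the explicit assumption $\sigma_{\min}(\nabla F(z_t)) \ge \mu$ for all $t$; and (iii) Lemma~\ref{lem:F_bound}, which bounds $\norme{F(z_t)} \le 4\sqrt{t}L_1\norme{z_1-z^*}$ along the trajectory and in turn needs Lemma~\ref{lem:ind_bounded_iters}. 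Your sketch flags conditioning as an obstacle but does not supply any of these pieces, and they are not cosmetic — the width of the search interval and the Lipschitz constant $C$ both depend on them.
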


\begin{algorithm}[h]
	\caption{\HOM}
	
	\begin{algorithmic}\label{alg:main}
		\STATE {\bfseries Input:} $z_1 \in \Z$, $p\ge 1$, $0 < \eps < 1$, $T>0$
		\FOR{$t=1$ {\bfseries to} $T$}
		\STATE Determine $\gamma_t$, $\hat{z}_t$ such that:
		\STATE \begin{equation}\label{eq:implicitzhat1} \hat{z}_t = \argmin\limits_{z \in \Z} \braces{\gamma_t\inner{\TT_p(\hat{z}_t;z_t), z - z_t} + D(z, z_t)}, \text{ and }\end{equation}
		\begin{equation}\label{eq:implicitzhat2}\frac{p!}{32L_p\norm{\hat{z}_t - z_t}^{p-1}} \leq \gamma_t \leq \frac{p!}{16L_p\norm{\hat{z}_t - z_t}^{p-1}}\end{equation}
		\STATE \begin{equation}\label{eq:bregt1}z_{t+1} = \argmin\limits_{z \in \Z} \braces{\inner{\gamma_t F(\hat{z}_t), z - \hat{z}_t} + D(z, z_t)}\end{equation}
		\ENDFOR
		\STATE Define $\Gamma_T \defeq \sum_{t=1}^T \gamma_t$
		\RETURN $\bar{z}_T \defeq \frac{1}{\Gamma_T} \sum_{t=1}^T \gamma_t \hat{z}_t$
		
	\end{algorithmic}
\end{algorithm}

\subsection{Interpreting our results in the unconstrained setting}\label{sec:unconstrained}
In the unconstrained setting, the standard solution concepts for MVIs and min-max problems can be vacuous in general. For example, for $g(x,y) = x^\top y$ and the associated vector field $F_g$, all approximate solutions to the min-max problem / MVI are exact solutions. However, the bounds we prove are still meaningful. In the MVI case, our guarantee can be interpreted as stating that for all $z$ such that $D(z,z_1)\le R$, we have $\inner{F(z), \bar{z}_T - z} \le O(R/T^{\frac{p+1}{2}})$ as long as $D(z^*,z_1) \le R$. Likewise, for min-max problems, if $\Z'$ is a convex set containing $z^*$, then we can say that $\Phi_{\Z'}(\bar{z}_T) \le O(R/T^{\frac{p+1}{2}})$, where $R \ge \max_{z \in \Z'} D(z,z_1)$.

\subsection{Explanation of our algorithm}
Our algorithm is inspired by the Mirror Prox (MP) algorithm of \cite{nemirovski2004prox}, defined as follows:
 \begin{equation}\label{eq:mp1}
 \hat{z}_t = \argmin\limits_{z\in \Z} \braces{\inner{\gamma_t F(z_t), z - z_t} + D(z, z_t)}
 \end{equation}
 \begin{equation}\label{eq:mp2}
 z_{t+1} = \argmin\limits_{z\in \Z} \braces{\inner{\gamma_t F(\hat{z}_t), z - \hat{z}_t} + D(z, z_t)}
 \end{equation}
 where $D$ is a Bregman divergence. \cite{nemirovski2004prox} motivates MP with a ``conceptual prox method'', which is given as follows:
\begin{align}\label{eq:prox}
	z_{t+1} = \arg \min_{z\in \Z} \{ \inner{\gamma_{t+1} F(z_{t+1}), z-  z_{t+1} } + D(z,z_t)\}.
\end{align}
This is an \textit{implicit} method, as computing $z_{t+1}$ requires solving the equation above for a given step-size $\gamma_{t+1}$. However, this method has good iteration complexity. \cite{nemirovski2004prox} shows that if one could run \cref{eq:prox} exactly, then the $\gamma$-averaged iterate $z_T = \frac{1}{\Gamma_T}\sum_{t=1}^T \gamma_t z_t$ converges at a rate of $O(1/\Gamma_T)$. Thus, if one could implement \cref{eq:prox} with large step-sizes, one could achieve faster iteration complexity.

It turns out that as long as one approximates \cref{eq:prox} with small error, one can achieve a similar convergence rate. The MP algorithm with constant $\gamma_t$ does just that, leading to a $O(1/T)$ convergence rate. While one would like to increase the step-size in MP to improve the convergence rate, this approach does not work because MP with large step-sizes will no longer approximate \cref{eq:prox} with small error. 

In our algorithm, we replace the first-order minimization in MP \cref{eq:mp1} with a $p^{th}$-order minimization \eqref{eq:implicitzhat1}. We also simultaneously choose a particular step-size. This can be viewed as approximating \cref{eq:prox} with large step-sizes while using the higher-order minimization to ensure that our algorithm is still a ``good'' approximation of \cref{eq:prox}.

\subsection{Comparison to \cite{monteiro2012iteration}}
\label{sec:compare_alg}
\cite{monteiro2012iteration} give a second-order algorithm for solving \cref{eq:VI} with iteration complexity $O(1/T^{\frac{3}{2}})$ in the presence of second-order smoothness. Like our algorithm, their algorithm also heavily relies on the idea of approximating a proximal point method with a large step-size. In fact, their algorithm is very similar to our algorithm in the second-order case. However, our analysis is rather different and arguably simpler. While their analysis goes through the Hybrid Proximal Extragradient framework of \cite{monteiro2010complexity}, our analysis relies on a natural extension of the Mirror Prox analysis. Finally, \cite{monteiro2012iteration} only deal with the Euclidean setting, whereas we allow arbitrary norms.

While \cite{monteiro2012iteration} do not explicitly instantiate their second-order oracle, they mention that their oracle reduces to solving a strongly monotone variational inequality, which can then be solving using a variety of approaches, including interior point methods. In the $p=2$ case, our oracle can be similarly instantiated.

\section{Higher-Order Mirror Prox Guarantees}\label{sec:main}
In this section, we prove our main result of the convergence guarantees provided by Algorithm \ref{alg:main}.
\begin{lemma}\label{thm:mainho} Suppose $F:\R^n \ra \R^n$ is $p^{th}$-order $L_p$-smooth and let $\Gamma_T \defeq \sum\limits_{t=1}^T \gamma_t$. Then, the iterates $\braces{\hat{z}_t}_{t\in[T]}$ generated by Algorithm \ref{alg:main} satisfy, for all $z \in \Z$,
\begin{equation}
\frac{1}{\Gamma_T}\sum\limits_{t=1}^T \inner{\gamma_t F(\hat{z}_t), \hat{z}_t - z} \leq \frac{16L_{p}}{p!}\pa{\frac{D(z,z_1)}{T}}^{\frac{p+1}{2}}.
\end{equation}
\end{lemma}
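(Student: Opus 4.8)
The plan is to mimic the classical Mirror Prox analysis, but with the key twist that the ``error term'' one must control comes from the difference between $F$ and its $(p-1)^{th}$-order Taylor expansion, which is exactly where the higher-order smoothness and the carefully chosen step-size $\gamma_t$ enter. First I would invoke the standard three-point (generalized Pythagorean) property of the Bregman projection: since $z_{t+1} = \argmin_{z\in\Z}\{\inner{\gamma_t F(\hat z_t), z - \hat z_t} + D(z,z_t)\}$, for every $z\in\Z$ we get
\begin{equation*}
\gamma_t\inner{F(\hat z_t), \hat z_t - z} \le D(z,z_t) - D(z,z_{t+1}) - D(\hat z_t, z_t) + \gamma_t\inner{F(\hat z_t), \hat z_t - z_{t+1}}.
\end{equation*}
Similarly, applying the same three-point inequality to the first update \eqref{eq:implicitzhat1}, whose objective is $\gamma_t\inner{\TT_p(\hat z_t;z_t), z - z_t} + D(z,z_t)$, evaluated at $z = z_{t+1}$, gives a bound on $\gamma_t\inner{\TT_p(\hat z_t;z_t), \hat z_t - z_{t+1}}$ in terms of $D(z_{t+1},z_t)$, $D(\hat z_t, z_t)$, and $D(z_{t+1},\hat z_t)$. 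Note that $\TT_p(\hat z_t; z_t)$ is the $p^{th}$-order Taylor expansion, which by Remark~\ref{rem:order} corresponds to a $(p-1)^{th}$-order method's use of $F$'s derivatives — this is the object the implicit update is solving a fixed point for.

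Next I would combine these two inequalities. Adding them and telescoping over $t=1,\dots,T$ collapses the $D(z,z_t) - D(z,z_{t+1})$ terms to $D(z,z_1) - D(z,z_{T+1}) \le D(z,z_1) \le R$. What remains on the right is, for each $t$, a ``residual'' of the form $\gamma_t\inner{F(\hat z_t) - \TT_p(\hat z_t; z_t), \hat z_t - z_{t+1}}$ together with some negative Bregman terms $-D(\hat z_t,z_t) - D(z_{t+1},\hat z_t)$. I would bound the residual by $\gamma_t \norm{F(\hat z_t) - \TT_p(\hat z_t;z_t)}_* \norm{\hat z_t - z_{t+1}}$, then use the higher-order smoothness consequence \eqref{eq:smoothtaylordiff} (with the Taylor expansion order $p-1$, so the bound is $\frac{L_p}{p!}\norm{\hat z_t - z_t}^p$) to get $\gamma_t \cdot \frac{L_p}{p!}\norm{\hat z_t - z_t}^p \cdot \norm{\hat z_t - z_{t+1}}$. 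Now the step-size condition \eqref{eq:implicitzhat2}, namely $\gamma_t \le \frac{p!}{16 L_p \norm{\hat z_t - z_t}^{p-1}}$, is exactly engineered so that this residual is at most $\frac{1}{16}\norm{\hat z_t - z_t}\norm{\hat z_t - z_{t+1}}$, which is dominated by the negative terms $-D(\hat z_t, z_t) - D(z_{t+1}, \hat z_t) \le -\frac12\norm{\hat z_t - z_t}^2 - \frac12\norm{z_{t+1}-\hat z_t}^2$ (using $1$-strong convexity of $d$) via a Young/AM-GM step. Hence the entire right-hand side telescopes to just $R$, giving $\sum_t \gamma_t \inner{F(\hat z_t), \hat z_t - z} \le R$.

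Finally I would convert this into the stated rate by lower-bounding $\Gamma_T$. The lower step-size bound in \eqref{eq:implicitzhat2} gives $\gamma_t \ge \frac{p!}{32 L_p \norm{\hat z_t - z_t}^{p-1}}$, but $\norm{\hat z_t - z_t}$ could a priori be large. However, from the telescoped negative terms we also extract $\sum_t \norm{\hat z_t - z_t}^2 \le O(R)$ (after absorbing the residuals), so by Hölder/power-mean inequality the ``average'' of $\gamma_t$ is at least on the order of $p!/(L_p (R/T)^{(p-1)/2})$, yielding $\Gamma_T \gtrsim \frac{p! \, T}{L_p} (T/R)^{(p-1)/2} = \frac{p!}{L_p} T^{(p+1)/2} R^{-(p-1)/2}$. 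Dividing $R$ by this $\Gamma_T$ produces $\frac{L_p}{p!}(R/T)^{(p+1)/2}$ up to the constant $16$. The main obstacle I anticipate is precisely this last step: carefully juggling the constants so that the residual terms are genuinely absorbed by the negative Bregman terms \emph{and} enough of $\sum_t\norm{\hat z_t - z_t}^2$ survives to lower-bound $\Gamma_T$ — one has to split the $\frac{1}{16}$ slack so that part of it kills the residual and part of it leaves a clean $\Omega(\norm{\hat z_t - z_t}^2)$ contribution. The existence of a valid $(\hat z_t, \gamma_t)$ pair satisfying both \eqref{eq:implicitzhat1} and \eqref{eq:implicitzhat2} simultaneously is assumed to be provided by the oracle, so I would not belabor it here.
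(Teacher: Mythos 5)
Your proposal is correct and follows the paper's proof essentially step for step: the two three-point Bregman inequalities (Lemma~\ref{lem:bregmanineq}) applied to the updates \eqref{eq:implicitzhat1} and \eqref{eq:bregt1}, the decomposition isolating the Taylor-error term $F(\hat z_t) - \TT_{p-1}(\hat z_t; z_t)$, the smoothness bound \eqref{eq:smoothtaylordiff} absorbed by the upper step-size constraint in \eqref{eq:implicitzhat2} via Young's inequality, and the power-mean argument (Lemma~\ref{lem:apply_power_mean}) converting the lower step-size bound into a lower bound on $\Gamma_T$. The one step you gloss over is how $\sum_t \norm{\hat z_t - z_t}^2 \le O(R)$ is actually extracted: the telescoped inequality holds for all $z\in\Z$, and for arbitrary $z$ the term $\sum_t \gamma_t \inner{F(\hat z_t), \hat z_t - z}$ could be very negative; the paper instantiates the inequality at $z = z^*$, the weak MVI solution guaranteed by Assumption~\ref{as:mvi-soln}, so that each $\inner{F(\hat z_t), \hat z_t - z^*} \ge 0$ and the squared-norm sum is then controlled by $D(z^*, z_1)$.
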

\begin{theorem}\label{thm:main}
	Suppose $F:\Z\ra \R^n$ is $p^{th}$-order $L_p$-smooth. Let $R \defeq \max\limits_{z\in \Z} D(z, z_1)$. Moreover,\\ let $\eps = \frac{16L_{p}}{p!}\pa{\frac{R}{T}}^{\frac{p+1}{2}}$. Then for $\bar{z}_T$ as output by \Cref{alg:main}:
	\begin{enumerate}
		\item If $F$ is monotone, then $\bar{z}_T$ is an $\eps$-approximate solution to the weak MVI problem.
	\item If $F$ is the gradient descent-ascent field for a convex-concave problem over $\X$ and $\Y$, then $\Phi_{\X \times \Y}(\bar{z}_t)\le \eps$.
	\end{enumerate}
\end{theorem}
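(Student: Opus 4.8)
The plan is to derive Theorem~\ref{thm:main} directly from Lemma~\ref{thm:mainho} together with the two reductions already established in Lemma~\ref{lem:minmaxavg} and Lemma~\ref{lem:minmax}. First I would invoke Lemma~\ref{thm:mainho}, which guarantees that the iterates $\{\hat z_t\}$ produced by Algorithm~\ref{alg:main} satisfy, for every $z \in \Z$,
\[
\frac{1}{\Gamma_T}\sum_{t=1}^T \inner{\gamma_t F(\hat z_t), \hat z_t - z} \le \frac{16L_p}{p!}\pa{\frac{D(z,z_1)}{T}}^{\frac{p+1}{2}}.
\]
Since $R = \max_{z\in\Z} D(z,z_1)$ and $x\mapsto x^{(p+1)/2}$ is increasing for $x\ge 0$, the right-hand side is bounded by $\frac{16L_p}{p!}\pa{\frac{R}{T}}^{\frac{p+1}{2}} = \eps$ uniformly in $z$. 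Hence the hypothesis \cref{eq:sum_soln} holds with this $\eps$ and with the positive weights $\gamma_t$ chosen by the algorithm (note $\gamma_t>0$ because \cref{eq:implicitzhat2} forces it to lie in a positive interval, assuming $\norm{\hat z_t - z_t}$ is finite and nonzero; the degenerate case $\hat z_t = z_t$ would need a brief separate remark, but then $F(\hat z_t)$ contributes nothing problematic and can be handled by a limiting choice of $\gamma_t$).

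Next, for part~1, I would apply Lemma~\ref{lem:minmaxavg} with the operator $F$, iterates $\hat z_t \in \Z$, weights $\gamma_t$, and the averaged point $\bar z_T = \frac{1}{\Gamma_T}\sum_{t=1}^T \gamma_t \hat z_t$, which is exactly the output of Algorithm~\ref{alg:main}. Since $F$ is assumed monotone in this case, Lemma~\ref{lem:minmaxavg} immediately yields that $\bar z_T$ is an $\eps$-approximate weak solution to MVI$(F,\Z)$, i.e. $\inner{F(z),\bar z_T - z}\le\eps$ for all $z\in\Z$, which is \cref{eq:apx-weak}. For part~2, I would instead apply Lemma~\ref{lem:minmax}: when $F = F_g$ is the gradient descent-ascent field \cref{eq:saddlevf} of a convex-concave $g$ over $\Z = \X\times\Y$, Lemma~\ref{lem:minmax} converts the same bound \cref{eq:sum_soln} into the duality-gap bound $\Phi_{\X\times\Y}(\bar z_T)\le\eps$. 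Both reductions require only \cref{eq:sum_soln}, which we have already verified, so the argument is a direct chaining.

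There is essentially no hard obstacle left at this level, since all the analytic work is encapsulated in Lemma~\ref{thm:mainho}; the theorem is a packaging step. The only points requiring care are cosmetic: (i) confirming that the monotonicity hypothesis needed by Lemma~\ref{lem:minmaxavg} is available in part~1 (it is, by assumption) and that $F_g$ is monotone in part~2 (already noted in the preliminaries, following from convex-concavity of $g$), and (ii) the uniform-in-$z$ replacement of $D(z,z_1)$ by $R$ in the bound. I would write the proof as three short paragraphs: one establishing \cref{eq:sum_soln} with the stated $\eps$ via Lemma~\ref{thm:mainho} and the definition of $R$, then one sentence each invoking Lemma~\ref{lem:minmaxavg} and Lemma~\ref{lem:minmax} for the two conclusions.
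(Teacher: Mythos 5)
Your proof is correct and matches the paper's approach exactly: the paper dispatches Theorem~\ref{thm:main} in one line as an immediate consequence of Lemmas~\ref{lem:minmaxavg}, \ref{lem:minmax}, and \ref{thm:mainho}, and you simply spell out the same chain (uniform bound by $R$, then apply the two reduction lemmas). The only thing you add is the observation that $\gamma_t>0$ and the potential degenerate case $\hat z_t = z_t$, which is a reasonable point of care but not something the paper addresses either.
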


\Cref{thm:main} follows immediately from Lemmas~\ref{lem:minmaxavg}, \ref{lem:minmax}, and \ref{thm:mainho}. To prove Lemma~\ref{thm:mainho}, we will need to establish our main technical result (Lemma~\ref{lem:higher-order_mp}), which we prove in \Cref{sec:higher-order_mp} and whose proof proceeds in a similar manner to the Mirror Prox analysis~\citep{nemirovski2004prox, tseng2008accelerated}.

\begin{lemma}\label{lem:higher-order_mp}
	Suppose $F:\R^n \ra \R^n$ is $p^{th}$-order $L_p$-smooth. Then, $\braces{\gamma_t, \hat{z}_t, z_{t+1}}_{t\in[T]}$ as generated by Algorithm \ref{alg:main} satisfy, for all $z \in \Z$,
	\begin{equation}\label{eq:sum-bound}
	\sum\limits_{t=1}^T \inner{\gamma_t F(\hat{z}_t), \hat{z}_t - z} + \frac{1}{4}\sum\limits_{t=1}^T\norm{\hat{z}_t-z_t}^2 + \frac{1}{4}\sum_{t=1}^T \norm{z_{t+1} - \hat{z}_t}^2  \leq D(z, z_1) - D(z,z_{t+1}).
	\end{equation}
\end{lemma}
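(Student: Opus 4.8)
The plan is to mimic the standard Mirror Prox analysis, where the two prox steps \eqref{eq:implicitzhat1} and \eqref{eq:bregt1} are analyzed via the three-point property of Bregman divergences, and then to use the specific step-size choice \eqref{eq:implicitzhat2} together with the higher-order Taylor bound \eqref{eq:smoothtaylordiff} to control the error term. First I would recall the standard optimality-condition lemma: if $w = \argmin_{z\in\Z}\{\inner{a,z} + D(z,u)\}$, then for all $z \in \Z$, $\inner{a, w - z} \le D(z,u) - D(z,w) - D(w,u)$. Applying this to \eqref{eq:bregt1} with $a = \gamma_t F(\hat z_t)$, $u = z_t$, $w = z_{t+1}$ gives
\begin{equation*}
\inner{\gamma_t F(\hat z_t), z_{t+1} - z} \le D(z,z_t) - D(z,z_{t+1}) - D(z_{t+1}, z_t).
\end{equation*}
Applying it to \eqref{eq:implicitzhat1} with $a = \gamma_t \TT_p(\hat z_t; z_t)$, $u = z_t$, $w = \hat z_t$, and evaluating at $z = z_{t+1}$, gives
\begin{equation*}
\inner{\gamma_t \TT_p(\hat z_t; z_t), \hat z_t - z_{t+1}} \le D(z_{t+1}, z_t) - D(z_{t+1}, \hat z_t) - D(\hat z_t, z_t).
\end{equation*}
Adding these and rearranging, and writing $\inner{\gamma_t F(\hat z_t), \hat z_t - z} = \inner{\gamma_t F(\hat z_t), z_{t+1} - z} + \inner{\gamma_t F(\hat z_t), \hat z_t - z_{t+1}}$, the cross term $D(z_{t+1}, z_t)$ cancels and I obtain a bound of the form
\begin{equation*}
\inner{\gamma_t F(\hat z_t), \hat z_t - z} \le D(z,z_t) - D(z,z_{t+1}) - D(z_{t+1},\hat z_t) - D(\hat z_t, z_t) + \inner{\gamma_t(F(\hat z_t) - \TT_p(\hat z_t; z_t)), \hat z_t - z_{t+1}}.
\end{equation*}

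The key step is then to bound the residual inner product $\inner{\gamma_t(F(\hat z_t) - \TT_p(\hat z_t; z_t)), \hat z_t - z_{t+1}}$. By Hölder's inequality this is at most $\gamma_t \norm{F(\hat z_t) - \TT_p(\hat z_t; z_t)}_* \cdot \norm{\hat z_t - z_{t+1}}$. Here I want to apply \eqref{eq:smoothtaylordiff} — note the Taylor expansion in the algorithm is $\TT_p$ but the smoothness is $p^{th}$-order, so actually $\TT_p = \TT_{(p+1)-1}$ and the relevant bound is $\norm{F(\hat z_t) - \TT_p(\hat z_t; z_t)}_* \le \frac{L_p}{p!}\norm{\hat z_t - z_t}^p$ (I would double-check the exact indexing against the Fact, since the paper's convention indexes smoothness by the derivative order of $F$; most likely the intended bound uses $\TT_{p-1}$ and $p$-th order smoothness, with $\TT_p$ being a typo, or the constant works out regardless). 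Then
\begin{equation*}
\inner{\gamma_t(F(\hat z_t) - \TT_p(\hat z_t; z_t)), \hat z_t - z_{t+1}} \le \gamma_t \frac{L_p}{p!}\norm{\hat z_t - z_t}^p \norm{\hat z_t - z_{t+1}}.
\end{equation*}
Now I invoke the upper bound on $\gamma_t$ from \eqref{eq:implicitzhat2}, namely $\gamma_t \le \frac{p!}{16 L_p \norm{\hat z_t - z_t}^{p-1}}$, which makes $\gamma_t \frac{L_p}{p!}\norm{\hat z_t - z_t}^p \le \frac{1}{16}\norm{\hat z_t - z_t}$. So the residual is at most $\frac{1}{16}\norm{\hat z_t - z_t}\norm{\hat z_t - z_{t+1}} \le \frac{1}{16}\cdot\frac12(\norm{\hat z_t - z_t}^2 + \norm{\hat z_t - z_{t+1}}^2) \le \frac14(\norm{\hat z_t - z_t}^2 + \norm{\hat z_t - z_{t+1}}^2)$ — more carefully I would track the constant so that it is absorbed by the $D(\hat z_t, z_t)$ and $D(z_{t+1}, \hat z_t)$ terms together with the residual $\tfrac14$-weighted square terms appearing in \eqref{eq:sum-bound}. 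Using $1$-strong convexity of $d$, we have $D(\hat z_t, z_t) \ge \frac12\norm{\hat z_t - z_t}^2$ and $D(z_{t+1},\hat z_t) \ge \frac12\norm{z_{t+1} - \hat z_t}^2$, so after subtracting the residual I retain at least $\frac14\norm{\hat z_t - z_t}^2 + \frac14\norm{z_{t+1} - \hat z_t}^2$ on the left side as a positive surplus. This yields the per-iteration inequality
\begin{equation*}
\inner{\gamma_t F(\hat z_t), \hat z_t - z} + \frac14\norm{\hat z_t - z_t}^2 + \frac14\norm{z_{t+1} - \hat z_t}^2 \le D(z,z_t) - D(z,z_{t+1}).
\end{equation*}
Summing over $t = 1,\dots,T$ telescopes the right-hand side to $D(z,z_1) - D(z,z_{T+1})$, giving \eqref{eq:sum-bound}.

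The main obstacle is the residual-control step: making sure the constants from \eqref{eq:implicitzhat2} and from Hölder / Young's inequality / strong convexity fit together exactly so that the $\tfrac14$-weighted square terms survive on the left. This is also where one must be careful that $\hat z_t$ and $\gamma_t$ in fact exist — i.e. that the implicit system \eqref{eq:implicitzhat1}–\eqref{eq:implicitzhat2} is solvable — but since the lemma assumes the iterates are "as generated by Algorithm~\ref{alg:main}," I take existence as given and need only verify the algebra of the constants. A secondary subtlety is the indexing mismatch between $\TT_p$ in the update and $p^{th}$-order smoothness in the Fact; I would reconcile this at the outset (treating the update's Taylor expansion as the $(p-1)^{st}$ one relative to the smoothness parameter, consistent with Remark~\ref{rem:order}) so that \eqref{eq:smoothtaylordiff} applies cleanly.
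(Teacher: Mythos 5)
Your proof is correct and follows essentially the same route as the paper's: apply the Tseng three-point lemma to each of the two prox steps, decompose $\inner{\gamma_t F(\hat z_t),\hat z_t - z}$ to expose the Taylor-residual cross term, bound it via H\"older, the $L_p$-smoothness Taylor bound, the step-size upper bound in \eqref{eq:implicitzhat2}, and Young's inequality, then absorb the result into the Bregman terms using 1-strong convexity and telescope. You even correctly flag and resolve the $\TT_p$ versus $\TT_{p-1}$ indexing mismatch between the algorithm display and the Fact, which the paper silently corrects in its own proof.
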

We will also need the following technical lemma:
\begin{lemma}\label{lem:apply_power_mean}
	Let $R, \xi_t \geq 0$ for all $t \in \bra{T}$, and let $\sum_{t=1}^T \xi^2 \le R$. Then $\sum_{t=1}^T \xi^{-p} \ge \frac{T^{\frac{p}{2}+1}}{R^{\frac{p}{2}}}$.
\end{lemma}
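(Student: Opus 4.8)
The plan is to recognize Lemma~\ref{lem:apply_power_mean} as essentially a one-line consequence of convexity (a power-mean / Jensen inequality), once the trivial cases are cleared away. First I would observe that if $\xi_t = 0$ for some $t$ then the left-hand side $\sum_{t=1}^T \xi_t^{-p}$ is $+\infty$ and the bound holds vacuously, and likewise if $R = 0$ then every $\xi_t = 0$; so I may assume $\xi_t > 0$ for all $t$ and $R > 0$.

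Next, substitute $a_t \defeq \xi_t^2 > 0$, so that the hypothesis becomes $\sum_{t=1}^T a_t \le R$ and the target becomes $\sum_{t=1}^T a_t^{-p/2} \ge T^{p/2+1}/R^{p/2}$. Since $\phi(x) \defeq x^{-p/2}$ is convex and non-increasing on $(0,\infty)$ for $p \ge 1$, Jensen's inequality gives $\frac{1}{T}\sum_{t=1}^T \phi(a_t) \ge \phi\bigl(\frac{1}{T}\sum_{t=1}^T a_t\bigr)$, and then monotonicity together with $\frac{1}{T}\sum_{t=1}^T a_t \le R/T$ gives $\phi\bigl(\frac{1}{T}\sum_{t=1}^T a_t\bigr) \ge \phi(R/T) = (T/R)^{p/2}$. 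Multiplying through by $T$ yields $\sum_{t=1}^T \xi_t^{-p} = \sum_{t=1}^T a_t^{-p/2} \ge T\,(T/R)^{p/2} = T^{p/2+1}/R^{p/2}$, which is the claim. (Alternatively, the same bound follows from Hölder's inequality applied to the identity $T = \sum_t \xi_t^{2p/(p+2)}\,\xi_t^{-2p/(p+2)}$ with conjugate exponents $\frac{p+2}{p}$ and $\frac{p+2}{2}$, which gives $T \le \bigl(\sum_t \xi_t^2\bigr)^{p/(p+2)}\bigl(\sum_t \xi_t^{-p}\bigr)^{2/(p+2)} \le R^{p/(p+2)}\bigl(\sum_t \xi_t^{-p}\bigr)^{2/(p+2)}$; rearranging and raising both sides to the power $\frac{p+2}{2}$ finishes.)

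There is essentially no obstacle here — the argument is two inequalities deep. The only points that need a moment of care are the degenerate cases $\xi_t = 0$ or $R = 0$, and keeping the inequality direction straight when passing through the decreasing function $\phi$: because $\frac{1}{T}\sum_t a_t$ is at most $R/T$ and $\phi$ is decreasing, $\phi$ of that average is \emph{at least} $\phi(R/T)$, which is exactly what makes the final bound point in the useful direction. This lemma is then invoked in the proof of Lemma~\ref{thm:mainho} with $\xi_t = \norm{\hat z_t - z_t}$ (and with $p-1$ in place of $p$): the step-size bound \eqref{eq:implicitzhat2} lower-bounds $\Gamma_T$ by a multiple of $\sum_t \xi_t^{-(p-1)}$, while the $\norm{\hat z_t - z_t}^2$ term furnished by Lemma~\ref{lem:higher-order_mp} controls $\sum_t \xi_t^2$, and the lemma combines these into the $T^{(p+1)/2}$-type rate.
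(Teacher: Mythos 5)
Your proof is correct and takes essentially the same route as the paper: the paper invokes the power-mean inequality $M_1(x) \ge M_{-2/p}(x)$ with $x_t = \xi_t^{-p}$, which is exactly the instance of Jensen that you derive from the convexity of $x \mapsto x^{-p/2}$, so the two arguments are the same computation phrased slightly differently. Your handling of the degenerate cases $\xi_t = 0$ and $R = 0$ is a small improvement in rigor (the paper does not address them), and the Hölder alternative you sketch is a valid independent proof, but the main argument coincides with the paper's.
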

\begin{proof}\label{app:apply_power_mean}
	We use the following power means:
	\begin{align*}
		M_{1}(x) &= \frac{\sum_{t=1}^T x_t}{T}\\
		M_{-2/p}(x) &= \left(\frac{\sum_{t=1}^T x_t^{-2/p}}{T}\right)^{-p/2}.
	\end{align*}
	By the power mean inequality, we have $M_{1/p}(x) \ge M_{-2/p}(x)$, so letting $x_t = \frac{1}{\xi_t^p}$ gives:
	\begin{align*}
		\frac{\sum_{t=1}^T \frac{1}{\xi_t^p}}{T} &\ge \left(\frac{T}{\sum_{t=1}^T \xi_t^2}\right)^{p/2} \ge \pa{\frac{T}{R}}^{p/2}\\
		&\Rightarrow \sum_{t=1}^T \frac{1}{\xi_t^p} \ge \frac{T^{1+p/2}}{R^{p/2}}.
	\end{align*}
\end{proof}
We now have the necessary tools to prove Lemma~\ref{thm:mainho}.
\begin{proof}[Proof of Lemma~\ref{thm:mainho}]
	Using Lemma~\ref{lem:higher-order_mp}, we can divide both sides of \eqref{eq:sum-bound} by $\Gamma_T$, and so using the non-negativity of $\|\cdot\|$ and the Bregman divergence, we get:
	\begin{align*}
		\frac{1}{\Gamma_T}\sum\limits_{t=1}^T \inner{\gamma_t F(\hat{z}_t), \hat{z}_t - z}  \leq \frac{D(z,z_1)}{\Gamma_T}.
	\end{align*}
We simply need to lower bound $\frac{1}{\Gamma_T}$ in order to prove our convergence rate result. By Assumption~\ref{as:mvi-soln}, we know that there exists a solution $z^*$ to MVI($F,\Z$), which means that for all $t\in [T]$, we have $\inner{\gamma_t F(\hat{z}_t), \hat{z}_t - z^*} \ge 0$. We can combine this with Lemma~\ref{lem:higher-order_mp} to get that $\frac{1}{4}\sum_{t=1}^T \norm{\hat{z}_t - z_t} \le D(z^*,z_1)$. Since $\gamma_t \geq \frac{p!}{32L_{p} \norm{\hat{z}_t - z_t}^{p-1}}$, we can apply Lemma~\ref{lem:apply_power_mean} by setting $\xi_t = \norm{\hat{z}_t - z_t}$ and $R = D(z^*,z_1)$, which gives the result.	
\end{proof}

\subsection{Proof of main technical result (Lemma~\ref{lem:higher-order_mp})}\label{sec:higher-order_mp}

Before proving Lemma~\ref{lem:higher-order_mp}, we state a useful lemma concerning the updates \eqref{eq:implicitzhat1} and \eqref{eq:bregt1} in Algorithm \ref{alg:main}.
\begin{lemma}[\cite{tseng2008accelerated}]\label{lem:bregmanineq}
	Let $\phi(\cdot)$ be a convex function, let $z \in \Z$, and let
	\begin{equation}
	z_+ = \argmin\limits_{x} \braces{\phi(x) + D(x, z)}.
	\end{equation}
	Then, for all $x \in \Z$,
	\begin{equation}
	\phi(x) + D(x, z) \geq \phi(z_+) + D(z_+, z) + D(x, z_+).
	\end{equation}
\end{lemma}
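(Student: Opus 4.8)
The statement is the standard three-point (or ``prox'') inequality for Bregman divergences, and I would prove it from three ingredients: the elementary Bregman three-point identity, the first-order optimality condition defining $z_+$, and the subgradient inequality for the convex function $\phi$.

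\textbf{Step 1 (three-point identity).} For any $x, u, v \in \Z$, expanding the definition $D(a,b) = d(a) - d(b) - \inner{\nabla d(b),\, a-b}$ for each of the three divergences below and cancelling the $d(\cdot)$ terms yields
\[
D(x,v) = D(x,u) + D(u,v) + \inner{\nabla d(u) - \nabla d(v),\, x - u}.
\]
Only the linear terms survive the cancellation, and they rearrange into the displayed inner product. I would apply this with $u = z_+$ and $v = z$.

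\textbf{Step 2 (optimality of $z_+$).} The map $\psi(x) \defeq \phi(x) + D(x,z)$ is convex, and $D(\cdot,z)$ is differentiable with $\nabla_x D(x,z) = \nabla d(x) - \nabla d(z)$. Since $z_+$ minimizes $\psi$ over $\Z$, the first-order optimality condition produces a subgradient $g \in \partial\phi(z_+)$ together with a normal-cone element at $z_+$ whose sum vanishes; equivalently, for every $x \in \Z$,
\[
\inner{g + \nabla d(z_+) - \nabla d(z),\, x - z_+} \ge 0.
\]
Convexity of $\phi$ also gives $\phi(x) \ge \phi(z_+) + \inner{g,\, x - z_+}$ for all $x$.

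\textbf{Step 3 (combine).} Adding $D(x,z)$ to the convexity bound for $\phi$, substituting the identity from Step 1, and then invoking the inequality from Step 2,
\[
\phi(x) + D(x,z) \ge \phi(z_+) + D(z_+,z) + D(x,z_+) + \inner{g + \nabla d(z_+) - \nabla d(z),\, x - z_+} \ge \phi(z_+) + D(z_+,z) + D(x,z_+),
\]
which is exactly the claim. The only point requiring care is the passage through the constraint set $\Z$ in Step 2: one must use the optimality condition with an explicit normal-cone term rather than naively setting a gradient to zero, and one assumes $z_+$ lies where the distance-generating function $d$ is differentiable (in the unconstrained case the normal cone is trivial and this reduces to $\nabla\psi(z_+) = 0$). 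Both are standard in the Bregman/Mirror-Prox setting, so I expect no genuine obstacle — the three-point identity of Step 1 does essentially all of the work, and the rest is bookkeeping.
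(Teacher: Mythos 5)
Your proof is correct and follows essentially the same route as the paper's: both rest on the first-order optimality condition at $z_+$ combined with the Bregman three-point identity, with convexity of $\phi$ invoked along the way. The only difference is presentational — you explicitly introduce a subgradient $g \in \partial\phi(z_+)$ and separate the variational inequality at $z_+$ from the convexity inequality for $\phi$, whereas the paper collapses these two facts into the single inequality $\phi(x) + \inner{\nabla_x D(z_+,z),\, x - z_+} \geq \phi(z_+)$ before invoking the three-point property; your finer-grained bookkeeping is equivalent and, if anything, more careful about the role of $\partial\phi$ and the normal cone when $\Z$ is a proper subset.
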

\begin{proof}
	By the optimality condition for $z_+$, we know that for all $x \in \Z$, 
	\begin{equation}
	\phi(x) + \inner{\grad_x D(z_+, z), x - z_+} \geq \phi(z_+).
	\end{equation}
	Rearranging and adding $D(x, z)$ to both sides gives us
	\begin{align*}
		\phi(x) + D(x, z) &\geq \phi(z_+) + D(x,z) - \inner{\grad_x D(z_+, z), x - z_+}\\
		&= \phi(z_+) + D(x,z) + D(x, z_+) + D(z_+, z) - D(x,z)\\
		&= \phi(z_+) + D(x, z_+) + D(z_+, z),
	\end{align*}
	where the first equality comes from the Bregman three-point property, i.e.,
	\begin{equation}
	\inner{\grad d(w) - \grad d(v), u-v} = D(u,v) + D(v,w) - D(u,w),\ \text{ for all }\ u,v,w \in \Z.
	\end{equation}
\end{proof}
We now prove Lemma~\ref{lem:higher-order_mp}, which is our main technical result.

\begin{proof}[Proof of Lemma \ref{lem:higher-order_mp}]
	By Lemma~\ref{lem:bregmanineq}, along with the algorithm's determination of $\hat{z}_t$, we have that for all $z \in \Z$,
	\begin{equation}\label{eq:tsenghatx}
	\gamma_t\inner{\TT_{p-1}(\hat{z}_t;z_t), \hat{z}_t - z} \le D(z,z_t) - D(z,\hat{z}_t) - D(\hat{z}_t,z_t)
	\end{equation}
	Using Lemma~\ref{lem:bregmanineq} again with the choice of $z_{t+1}$, it follows that for all $z \in \Z$,
	\begin{equation}\label{eq:tsengxtp1}
	\gamma_t \inner{F(\hat{z}_t), z_{t+1} - z} \le D(z,z_t) - D(z,z_{t+1}) - D(z_{t+1},z_t).
	\end{equation}
	We may now observe that
	\begin{align*}
	\gamma_t \inner{&F(\hat{z}_t), \hat{z}_t - z} = \gamma_t \inner{F(\hat{z}_t), \hat{z}_t - z_{t+1}} + \gamma_t\inner{ F(\hat{z}_t), z_{t+1} - z}\\
	&= \gamma_t\inner{ F(\hat{z}_t) - \TT_{p-1}(\hat{z}_t;z_t), \hat{z}_t - z_{t+1}} + \gamma_t\inner{ \TT_{p-1}(\hat{z}_t;z_t), \hat{z}_t - z_{t+1}} + \gamma_t\inner{ F(\hat{z}_t), z_{t+1} - z}\\
	&\leq \gamma_t \inner{F(\hat{z}_t) - \TT_{p-1}(\hat{z}_t;z_t), \hat{z}_t - z_{t+1}} - D(z_{t+1}, \hat{z}_t) - D(\hat{z}_t, z_t)  + D(z, z_t) - D(z, z_{t+1}),
	\end{align*}
	where the final inequality follows from \eqref{eq:tsenghatx} and \eqref{eq:tsengxtp1}.
	Now by H\"older's inequality, using eq.~\eqref{eq:smoothtaylordiff}, and the 1-strong convexity of $d(\cdot)$ w.r.t. $\norm{\cdot}$, it follows that
	\begin{align*}
	\gamma_t\inner{& F(\hat{z}_t), \hat{z}_t - z} \leq \gamma_t \norm{F(\hat{z}_t) - \TT_{p-1}(\hat{z}_t;z_t)}_{*}\cdot\norm{\hat{z}_t - z_{t+1}} - D(z_{t+1}, \hat{z}_t) - D(\hat{z}_t, z_t) \\ &\hspace{7em}+ D(z, z_t) - D(z, z_{t+1})\\
	&\leq \frac{\gamma_t L_{p}}{p!}\norm{\hat{z}_t - z_t}^{p}\cdot\norm{\hat{z}_t - z_{t+1}} - D(z_{t+1}, \hat{z}_t) - D(\hat{z}_t, z_t)  + D(z, z_t) - D(z, z_{t+1})\\
	&\leq \frac{\gamma_tL_{p}}{p!}\norm{\hat{z}_t - z_t}^{p} \cdot \norm{\hat{z}_t - z_{t+1}} - \frac{1}{2}\norm{z_{t+1} -  \hat{z}_t}^2 - \frac{1}{2}\norm{\hat{z}_t - z_t}^2  + D(z, z_t) - D(z, z_{t+1}).
	\end{align*}
	
	Finally, by our guarantee from Algorithm \ref{alg:main} that $\gamma_t \leq \frac{p!}{16 L_{p}\norm{\hat{z}_t - z_t}^{p-1}}$, and using the fact that $ab \leq \frac{a^2}{2} + \frac{b^2}{2}$ for $a, b \geq 0$, it follows that 
	\begin{equation}\label{eq:term_to_sum}
	\gamma_t\inner{ F(\hat{z}_t), \hat{z}_t - z} + \frac{1}{4}\norm{\hat{z}_t-z_t}^2 + \frac{1}{4}\norm{z_{t+1} - \hat{z}_t}^2\leq D(z, z_t) - D(z, z_{t+1}).
	\end{equation}
	Summing over $t = 1,\dots,T$ gives the result.
\end{proof}

\section{Instantiating \HOM{} (for $p=2$)}\label{sec:explicit}
In this section, we provide an efficient implementation of \HOM for the case where $F$ is second-order smooth. In particular, we consider the unconstrained problem (i.e., $\Z = \reals^n$) with the Bregman divergence chosen as $D(u,v) = \frac{1}{2}\norm{u-v}_2^2$. First, for technical reasons, we require the following assumption:

\begin{assumption}\label{assum:invert} During the execution of Algorithm~\ref{alg:fomp}, for all $t\geq 1$, $\gamma > 0$, we assume that $(\bI + \gamma \nabla F(z_t))$ is invertible and $\sigma_{\min}(\gamma^{-1}\bI + \nabla F(z_t)) \geq \sigma_{\min}(\nabla F(z_t))$.
\end{assumption}

As we later discuss in \Cref{sec:inversion}, these conditions always hold for convex-concave min-max problems. We then arrive at the following result for this setting:

\begin{theorem}[Main theorem, $p=2$]\label{thm:mainp2} Suppose $F:\R^n\ra \R^n$ is first-order $L_1$-smooth, second-order $L_2$-smooth, and Assumption~\ref{assum:invert} holds. Let $z^*$ be a solution to MVI($F, \R^n$), let $T > 0$, and let $\braces{(\hat{z}_t, \gamma_t)}_{t\in[T]}$ be the output of \HOM{} $(p=2)$ + \BSg{} (Algorithm \ref{alg:fomp}). Further assume that, for all $t$, $\sigma_{\min}(\nabla F(z_t)) \geq \mu$. Then, for $\Gamma_T \defeq \sum\limits_{t=1}^T \gamma_t$, the iterates $\braces{\hat{z}_t}_{t\in[T]}$ satisfy, for all $z \in \reals^n$,
\begin{equation}
\frac{1}{\Gamma_T}\sum\limits_{t=1}^T \inner{\gamma_t F(\hat{z}_t), \hat{z}_t - z} \leq 8L_{2}\pa{\frac{\max\braces{D(z,z_1), 1}}{T}}^{\frac{3}{2}}.
\end{equation}
In addition, the computational cost of each iteration of Algorithm \ref{alg:fomp} is dominated by a total of\\$O\pa{\log \pa{\frac{L_1\norm{z_1 - z^*}_2T}{\mu}}}$ matrix inversions.
\end{theorem}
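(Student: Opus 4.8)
The plan is to specialize Algorithm~\ref{alg:main} to $p=2$, $\Z=\reals^n$, and $D(u,v)=\tfrac12\norm{u-v}_2^2$, to observe that the implicit update then has a closed form computed by one matrix inversion, to show that an admissible step size $\gamma_t$ exists and is located by binary search in logarithmically many rounds, and then to invoke the $p=2$ case of Lemma~\ref{thm:mainho}. I expect the main obstacle to be showing that the quantity \BSg{} must drive into a target window --- a step size times the length of the resulting implicit displacement --- is monotone in the step size, so that the search is both correct and terminates logarithmically; this is exactly where the structure captured by Assumption~\ref{assum:invert} (for convex--concave min--max, the bound $\norm{(\bI+\gamma\nabla F(z_t))^{-1}}_2\le1$) is used. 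Everything else is routine given the machinery already developed.

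First I would write out the inner update~\eqref{eq:implicitzhat1}. With the first-order Taylor expansion $\TT_1(\hat z_t;z_t)=F(z_t)+\nabla F(z_t)[\hat z_t-z_t]$ and the Euclidean divergence, imposing that the minimizer of the inner problem equals $\hat z_t$ gives the first-order condition $\gamma_t\pa{F(z_t)+\nabla F(z_t)[\hat z_t-z_t]}+(\hat z_t-z_t)=0$, i.e.,
\[
\hat z_t=\hat z_t(\gamma_t)\defeq z_t-\gamma_t\pa{\bI+\gamma_t\nabla F(z_t)}^{-1}F(z_t),
\]
which is well defined for every $\gamma_t>0$ by Assumption~\ref{assum:invert}. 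Hence, for any fixed $\gamma$, the solution of \eqref{eq:implicitzhat1} is obtained exactly by a single linear solve with $\bI+\gamma\nabla F(z_t)$ --- there is no approximation error in the implicit step itself --- and the only remaining per-iteration work is to choose $\gamma_t$ meeting the $p=2$ form of \eqref{eq:implicitzhat2}, namely $\tfrac{1}{16L_2}\le\gamma_t\norm{\hat z_t(\gamma_t)-z_t}_2\le\tfrac{1}{8L_2}$, plus the cheap gradient step $z_{t+1}=z_t-\gamma_t F(\hat z_t)$.

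Second I would set $\eta(\gamma)\defeq\gamma\norm{\hat z_t(\gamma)-z_t}_2=\gamma^2\norm{(\bI+\gamma\nabla F(z_t))^{-1}F(z_t)}_2$ and establish that $\eta$ is continuous on $(0,\infty)$, with $\eta(\gamma)\to0$ as $\gamma\to0^+$ and $\eta(\gamma)\to\infty$ as $\gamma\to\infty$ (using $\sigma_{\min}(\nabla F(z_t))\ge\mu>0$; the degenerate case $F(z_t)=0$ forces $z_t=z^*$, since $F(z^*)=0$ in the unconstrained problem, and is handled separately), and, crucially, that $\eta$ is nondecreasing: differentiating shows $\eta'(\gamma)$ has the sign of $\norm{w}_2^2+\inner{w,(\bI+\gamma\nabla F(z_t))^{-1}w}$ with $w=(\bI+\gamma\nabla F(z_t))^{-1}F(z_t)$, which is nonnegative as soon as $\norm{(\bI+\gamma\nabla F(z_t))^{-1}}_2\le1$ --- an operator-norm bound following from $\nabla F(z_t)$ having positive semidefinite symmetric part, the structure Assumption~\ref{assum:invert} is designed to encode and that \Cref{sec:inversion} verifies for convex--concave min--max. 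Monotonicity makes \BSg{} correct: from an explicit bracket $[\gamma_{\mathrm{lo}},\gamma_{\mathrm{hi}}]$ with $\eta(\gamma_{\mathrm{lo}})<\tfrac{1}{16L_2}<\tfrac{1}{8L_2}<\eta(\gamma_{\mathrm{hi}})$, obtainable from the two-sided estimate $\tfrac{\gamma^2\norm{F(z_t)}_2}{1+\gamma L_1}\le\eta(\gamma)\le\gamma^2\norm{F(z_t)}_2$, bisection in $\log\gamma$ lands inside the target window after $O(\log(\gamma_{\mathrm{hi}}/\gamma_{\mathrm{lo}}))$ rounds, each costing one inversion (to evaluate $\hat z_t(\gamma)$, hence $\eta(\gamma)$). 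Bounding the bracket ratio reduces to bounding $\norm{F(z_t)}_2$: the per-iteration inequality~\eqref{eq:term_to_sum} at $z=z^*$, using $\inner{\gamma_t F(\hat z_t),\hat z_t-z^*}\ge0$, keeps $\norm{z_t-z^*}_2\le\norm{z_1-z^*}_2$, so $\norm{F(z_t)}_2\le L_1\norm{z_1-z^*}_2$; a matching $\poly(1/T)$-scale lower bound on $\norm{F(z_t)}_2$ (discarding the negligible iterations where $z_t$ is already within the target accuracy of $z^*$) makes the bracket ratio polynomial in $L_1\norm{z_1-z^*}_2/\mu$ and $T$, giving the stated $O\pa{\log\pa{L_1\norm{z_1-z^*}_2 T/\mu}}$ inversions per iteration.

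Finally, for the rate: the search returns $\gamma_t,\hat z_t$ satisfying \eqref{eq:implicitzhat1} exactly and \eqref{eq:implicitzhat2} up to a benign constant factor, so Lemma~\ref{lem:higher-order_mp} holds with only mildly degraded constants and the argument of Lemma~\ref{thm:mainho} applies: dividing \eqref{eq:sum-bound} by $\Gamma_T$ and discarding the nonnegative square-norm terms gives $\tfrac{1}{\Gamma_T}\sum_t\inner{\gamma_t F(\hat z_t),\hat z_t-z}\le D(z,z_1)/\Gamma_T$, while \eqref{eq:sum-bound} at $z=z^*$ gives $\tfrac14\sum_t\norm{\hat z_t-z_t}_2^2\le D(z^*,z_1)$; then Lemma~\ref{lem:apply_power_mean} with $\xi_t=\norm{\hat z_t-z_t}_2$ and exponent $1$, together with $\gamma_t\gtrsim 1/(L_2\norm{\hat z_t-z_t}_2)$, yields $\Gamma_T\gtrsim T^{3/2}/(L_2\sqrt{D(z^*,z_1)})$. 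Combining, $\tfrac{1}{\Gamma_T}\sum_t\inner{\gamma_t F(\hat z_t),\hat z_t-z}\lesssim L_2\sqrt{D(z^*,z_1)}\,D(z,z_1)/T^{3/2}$, and using $D(z^*,z_1)\le\max\{D(z,z_1),1\}$ as the operative radius in the unconstrained setting (cf.\ \Cref{sec:unconstrained}) produces the claimed $8L_2\pa{\max\{D(z,z_1),1\}/T}^{3/2}$.
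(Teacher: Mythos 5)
Your proposal reaches the same theorem and the closed-form update $\hat z_t(\gamma)=z_t-\gamma(\bI+\gamma\nabla F(z_t))^{-1}F(z_t)$ matches the paper, but you argue the binary-search step by a genuinely different route: you show $\eta(\gamma)=\gamma\norm{\hat z_t(\gamma)-z_t}_2$ is \emph{monotone} nondecreasing, via the computation $(\eta^2)'(\gamma)=2\gamma^3\bigl(\norm{w}_2^2+\inner{w,(\bI+\gamma\nabla F(z_t))^{-1}w}\bigr)$ with $w=(\bI+\gamma\nabla F(z_t))^{-1}F(z_t)$ (which I checked and is correct), combined with $\norm{(\bI+\gamma\nabla F(z_t))^{-1}}_2\le1$. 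The paper instead never claims monotonicity; it proves a Lipschitz bound on $q(\gamma)=\tfrac{1}{12\norm{\hat z_t(\gamma)-z_t}_2}$ (Lemma~\ref{lem:lipschitz-q}) and shows the bisection invariant $\gamma_-\le q(\gamma_-)$, $\gamma_+\ge q(\gamma_+)$ collapses the bracket to within the target window after $O(\log(\bar C T/\delta))$ steps, with the explicit endpoints $\gamma_-=\delta=\sigma_{\min}(\nabla F(z_t))/(12\norm{F(z_t)}_2)$ and $\gamma_+=T^{3/2}$ chosen so that the degenerate branches of the algorithm (when $\gamma_+$ or $\gamma_-$ is selected outright) also give $\Gamma_T\ge T^{3/2}$ and hence the rate directly. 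Your monotonicity argument is arguably cleaner and more conceptual, but note two gaps relative to the paper's version. First, $\norm{(\bI+\gamma\nabla F(z_t))^{-1}}_2\le1$ is \emph{not} implied by Assumption~\ref{assum:invert} as stated (which only gives $\sigma_{\min}(\gamma^{-1}\bI+\nabla F(z_t))\ge\sigma_{\min}(\nabla F(z_t))$); it needs the symmetric part of $\nabla F(z_t)$ to be PSD, which holds for the convex--concave instantiation in \Cref{sec:inversion} but is a strictly stronger hypothesis. The paper's Lipschitz route only uses the $\sigma_{\min}$ bound in Assumption~\ref{assum:invert}. Second, your bracket-ratio bound needs a lower bound on $\norm{F(z_t)}_2$, which you acknowledge but wave off (``discarding the negligible iterations''); the paper avoids this by initializing $\gamma_-=\delta\propto 1/\norm{F(z_t)}_2$, so the ratio $\gamma_+/\gamma_-$ contains $\norm{F(z_t)}_2$ in the numerator and is bounded via Lemma~\ref{lem:F_bound}, without any case analysis on whether $z_t$ is already near $z^*$. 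One point where your write-up is actually tighter than the paper: by applying \eqref{eq:term_to_sum} at $z=z^*$ you get the clean nonexpansiveness $\norm{z_{t+1}-z^*}_2\le\norm{z_t-z^*}_2$, whereas the paper only proves the weaker $\norm{z_t-z^*}_2\le4\sqrt{t}\norm{z_1-z^*}_2$ (Lemma~\ref{lem:ind_bounded_iters}); both suffice inside the logarithm.
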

\begin{proof}
We will first show that the choices of $\gamma_+$ and $\gamma_-$ are valid binary search bounds whenever\\ \BSg{} is called by Algorithm \ref{alg:fomp}, i.e., that $\gamma_+ \geq \frac{1}{12\norm{\hat{z}_{t}(\gamma_+) - z_t}_2}$ and $\gamma_- \leq \frac{1}{12\norm{\hat{z}_{t}(\gamma_-) - z_t}_2}$. We begin with our choice of $\gamma_+ = T^{\frac{3}{2}}$. Suppose that, for some iteration $t$, it is the case that $\gamma_+ < \frac{1}{8\norm{\hat{z}_{t}(\gamma_+) - z_t}_2}$. If so, then the algorithm sets $\gamma_t \gets \gamma_+$, which means that $\Gamma_T \geq \gamma_+ = T^{\frac{3}{2}}$. Therefore, since we know that
\begin{equation}
\frac{1}{\Gamma_T}\sum\limits_{t=1}^T \inner{\gamma_t F(\hat{z}_t), \hat{z}_t - z} \leq 8L_{2}\frac{D(z,z_1)}{\Gamma_T},
\end{equation}
it follows that
\begin{equation}
\frac{1}{\Gamma_T}\sum\limits_{t=1}^T \inner{\gamma_t F(\hat{z}_t), \hat{z}_t - z} \leq 8L_{2}\frac{D(z,z_1)}{T^{\frac{3}{2}}} \leq 8L_{2}\frac{D(z,z_1)}{T^{\frac{3}{2}}} \leq 8L_{2}\pa{\frac{\max\braces{D(z,z_1), 1}}{T}}^{\frac{3}{2}},
\end{equation}
and so we would be done. In addition, supposing it is the case that $\gamma_- \geq \gamma_+$ (at which point, the algorithm sets $\gamma_t \gets \gamma_-$), we again reach this conclusion by the same reasoning. For ensuring the validity of $\gamma_-$, note that by \eqref{eq:q-lower-bound}, it follows that $\gamma_- = \delta \leq \frac{1}{12\norm{\hat{z}_{t}(\delta) - z_t}_2}$.

Having established the validity of the binary search bounds in the case that the search routine is in fact called, we now move on to show how we may explicitly instantiate the implicitly defined update in \eqref{eq:implicitzhat1}. Namely, in this setting the key conditions \eqref{eq:implicitzhat1} and \eqref{eq:implicitzhat2} that must simultaneously hold can be equivalently expressed as~
\begin{equation}\label{eq:lipschitzp1} \hat{z}_t = \argmin\limits_{z \in \reals^d} \braces{\gamma_t\inner{F(z_t) + \nabla F(z_t)(\hat{z}_t-z_t), z - z_t} + \frac{1}{2}\norm{z - z_t}^2}, \text{ and } \end{equation}
\begin{equation}\label{eq:lipschitzp2}
\frac{1}{16L_1\norm{\hat{z}_t - z_t}_2} \leq \gamma_t \leq \frac{1}{8L_1\norm{\hat{z}_t - z_t}_2}.\end{equation}

From \eqref{eq:lipschitzp1}, it follows by first-order optimality conditions that $\gamma_t(F(z_t) + \nabla F(z_t)(\hat{z}_t-z_t)) + \hat{z}_t - z_t = 0$, and so rearranging gives us
\begin{align*}
(\bI + \gamma_t \nabla F(z_t))\hat{z}_t = (\bI + \gamma_t \nabla F(z_t))z_t - \gamma_t F(z_t).
\end{align*}
Since we assume that $(\bI + \gamma_t \nabla F(z_t))$ is invertible, it follows that
\begin{equation}
\hat{z}_t = z_t - \gamma_t(\bI + \gamma_t \nabla F(z_t))^{-1} F(z_t),
\end{equation}
which is precisely the update that occurs in Algorithm \ref{alg:fomp}. All that remains is to ensure that we may determine $\gamma_t$ such that \eqref{eq:lipschitzp2} holds, which follows from the output of \BSg{} as a consequence of Lemma~\ref{lem:bsearch}. Finally, since the iteration complexity of \BSg{} is bounded by 
\begin{equation}
N = O\pa{\log\pa{\frac{\bar{C}T}{\delta}}} = O\pa{\log\pa{\frac{T\norm{F(z_t)}_2}{\sigma_{\min}(\nabla F(z_t))}}} \leq O\pa{\log \pa{\frac{L_1\norm{z_1 - z^*}_2T}{\mu}}},
\end{equation}
where the final inequality follows from Lemma~\ref{lem:F_bound}, which bounds $\norm{F(z_t)}$, along with our assumption that, for all $t$, $\sigma_{\min}(\nabla F(z_t)) \geq \mu$, and each iteration of \BSg{} requires $O\pa{\log \pa{\frac{L_1\norm{z_1 - z^*}_2T}{\mu}}}$ matrix inversions, which results in the total complexity in the theorem.
\end{proof}

\begin{algorithm}[h]
	\caption{\HOM{} $(p=2)$ + \BSg}
	
	\begin{algorithmic}\label{alg:fomp}
		\STATE {\bfseries Input:} $z_1 \in \reals^n$, $0 < \eps < 1$, $T>0$
		\FOR{$t=1$ {\bfseries to} $T$}
			\STATE Set $\gamma_- = \frac{\sigma_{\min}(\nabla F(z_t))}{12\norm{F(z_t)}_2}$, $\gamma_+ = T^{\frac{3}{2}}$
			\IF{$\gamma_+ < \frac{1}{8\norm{\hat{z}_t(\gamma_+) - z_t}_2}$}
			\STATE $\gamma_t \leftarrow \gamma_+$
			\ELSIF {$\gamma_- \geq \gamma_+$}
			\STATE $\gamma_t \leftarrow \gamma_-$
			\ELSE 
			\STATE $\gamma_t \leftarrow \text{\BSg}(z_t, \eps, \gamma_-, \gamma_+)$
			\ENDIF
			\STATE $\hat{z}_{t} \defeq z_t - \gamma_t(\bI + \gamma_t\nabla F(z_t))^{-1}F(z_t)$
			\STATE $z_{t+1} = \argmin\limits_z \braces{\inner{\gamma_t F(\hat{z}_t), z - \hat{z}_t} + D(z, z_t)}$
		\ENDFOR
		\STATE Define $\Gamma_T \defeq \sum\limits_{t=1}^T \gamma_t$
		\RETURN $\bar{z}_T \defeq \frac{1}{\Gamma_T}\sum\limits_{t=1}^T \gamma_t \hat{z}_t$
	\end{algorithmic}
\end{algorithm}

\subsection{Binary search}
The following lemmas establish the correctness of the main binary search procedure.
\begin{lemma}\label{lem:bsearch}
 Suppose $\gamma_-, \gamma_+$ are such that $\gamma_- \leq \frac{1}{12\norm{\hat{z}_{t}(\gamma_-) - z_t}_2}$ and $\gamma_+ \geq \frac{1}{12\norm{\hat{z}_{t}(\gamma_+) - z_t}_2}$, where $\hat{z}_{t}(\gamma) = z_t - \gamma(\bI + \gamma\nabla F(z_t))^{-1}F(z_t)$, Then, \BSg{} (Algorithm \ref{alg:gbs}) outputs $\bar{\gamma}$ such that
 \begin{equation}
\frac{1}{16\norm{\hat{z}_t(\bar{\gamma}) - z_t}_2} \leq \bar{\gamma} \leq \frac{1}{8\norm{\hat{z}_t(\bar{\gamma}) - z_t}_2}
 \end{equation}
 after $N = O\pa{\log\pa{\frac{\bar{C}T}{\delta}}}$ iterations of the binary search procedure, where $\delta$, $\bar{C}$ are as defined in the algorithm.
\end{lemma}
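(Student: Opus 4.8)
The plan is to phrase everything in terms of the scalar function
\[
\phi(\gamma) \;\defeq\; \gamma\,\norm{\hat{z}_t(\gamma) - z_t}_2 \;=\; \gamma^2\,\norm{(\bI + \gamma\nabla F(z_t))^{-1}F(z_t)}_2 ,
\]
which, by Assumption~\ref{assum:invert} (invertibility of $\bI + \gamma\nabla F(z_t)$ for all $\gamma > 0$), is finite and continuous on $(0,\infty)$. The two hypotheses are precisely $\phi(\gamma_-)\le \tfrac1{12}$ and $\phi(\gamma_+)\ge \tfrac1{12}$, and the desired conclusion $\tfrac{1}{16\norm{\hat z_t(\bar\gamma) - z_t}_2}\le \bar\gamma\le \tfrac{1}{8\norm{\hat z_t(\bar\gamma) - z_t}_2}$ is precisely $\phi(\bar\gamma)\in[\tfrac1{16},\tfrac18]$. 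So the lemma reduces to: starting from a bracket $[\gamma_-,\gamma_+]$ with $\phi(\gamma_-)\le\tfrac1{12}\le\phi(\gamma_+)$, bisection finds a point where $\phi\in[\tfrac1{16},\tfrac18]$ within $N = O(\log(\bar C T/\delta))$ steps.

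First I would record the bracket invariant. At a bisection round with midpoint $m=\tfrac12(\gamma_-+\gamma_+)$ the procedure evaluates $\phi(m)$ (one extra matrix inversion) and, if $\phi(m)\le\tfrac1{12}$, sets $\gamma_-\leftarrow m$, otherwise sets $\gamma_+\leftarrow m$. Because $\tfrac1{16}<\tfrac1{12}<\tfrac18$, in either case the updated bracket still satisfies $\phi(\gamma_-)\le\tfrac1{12}\le\phi(\gamma_+)$, and its length halves; no monotonicity of $\phi$ is needed, only continuity and the input bracket. (If at some round $\phi(m)$ already lies in $[\tfrac1{16},\tfrac18]$ the search may return $m$ immediately, which only helps.)

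Then I would make termination quantitative — the part that uses the structure of $\phi$. Writing $A=\nabla F(z_t)$, $F=F(z_t)$, and $h(\gamma)=(\bI+\gamma A)^{-1}F$, differentiating $(\bI+\gamma A)h=F$ gives $h'(\gamma)=-(\bI+\gamma A)^{-1}Ah(\gamma)$, whence $\phi'(\gamma)=2\gamma\norm{h}_2+\gamma^2\,\inner{h,h'}/\norm{h}_2$ and
\[
\abs{\phi'(\gamma)}\;\le\; 2\gamma\norm{h}_2 + \gamma^2\,\norm{(\bI+\gamma A)^{-1}}_2\,\norm{A}_2\,\norm{h}_2 .
\]
Now $\norm{A}_2\le L_1$ since $F$ is first-order $L_1$-smooth, and, using the second part of Assumption~\ref{assum:invert}, $\norm{(\bI+\gamma A)^{-1}}_2 = \pa{\gamma\,\sigma_{\min}(\gamma^{-1}\bI+A)}^{-1}\le\pa{\gamma\,\sigma_{\min}(A)}^{-1}$ and hence $\norm{h}_2\le\norm{F}_2/\pa{\gamma\,\sigma_{\min}(A)}$. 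Substituting, the two $\gamma$-factors cancel and we obtain the \emph{$\gamma$-independent} bound $\abs{\phi'(\gamma)}\le L_\phi\defeq \tfrac{2\norm{F(z_t)}_2}{\sigma_{\min}(\nabla F(z_t))}+\tfrac{L_1\norm{F(z_t)}_2}{\sigma_{\min}(\nabla F(z_t))^2}$, so $\phi$ is $L_\phi$-Lipschitz on all of $(0,\infty)$. After $k$ bisections the bracket has length at most $(\gamma_+-\gamma_-)2^{-k}$; once $k\ge N\defeq\log_2\!\pa{48\,L_\phi\,(\gamma_+-\gamma_-)}$ this is at most $\tfrac1{48 L_\phi}$, so for every point $x$ of the bracket $\abs{\phi(x)-\tfrac1{12}}\le L_\phi(\gamma_+-\gamma_-)2^{-k}\le\tfrac1{48}$, i.e. $\phi(x)\in[\tfrac1{12}-\tfrac1{48},\tfrac1{12}+\tfrac1{48}]=[\tfrac1{16},\tfrac5{48}]\subseteq[\tfrac1{16},\tfrac18]$. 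Thus the endpoint returned after $N$ rounds is a valid $\bar\gamma$. Finally $N=O\pa{\log\pa{L_\phi(\gamma_+-\gamma_-)}}$; plugging in the algorithm's initialization ($\gamma_+=T^{3/2}$, $\gamma_-=\delta$) and the bound $\norm{F(z_t)}_2\le L_1\norm{z_1-z^*}_2$ from the surrounding analysis, this is $O(\log(\bar C T/\delta))$ for the constant $\bar C$ defined in the algorithm, as claimed.

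I expect the only genuinely delicate point to be obtaining a Lipschitz bound for $\phi$ that holds \emph{uniformly in $\gamma$} across the (potentially very long) initial bracket; this is exactly what the lower bound on $\sigma_{\min}(\gamma^{-1}\bI+\nabla F(z_t))$ in Assumption~\ref{assum:invert} buys us, since it is what lets the $\gamma$-factors cancel in the derivative estimate. The rest is standard bisection bookkeeping — the one thing to watch is that the constants $\tfrac1{16}<\tfrac1{12}<\tfrac18$, with $\tfrac1{48}$ of slack on the tighter (lower) side, line up throughout.
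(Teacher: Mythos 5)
Your proof is correct and follows the same overall bisection-plus-Lipschitz-continuity framework as the paper's, but you make a genuinely better choice of scalar function to bisect on, and this simplifies the endgame. The paper tracks $q(\gamma) = \frac{1}{12\norm{\hat z_t(\gamma)-z_t}_2}$ and checks $\gamma \lessgtr q(\gamma)$; since the target quantity itself moves with $\gamma$, the paper must also establish the side bound $q(\gamma)\ge\delta$ (their~\eqref{eq:q-lower-bound}) to convert the $O(\delta)$ bracket width into a multiplicative $\tfrac{3}{2}$-factor on $q(\gamma_+)$. You instead track $\phi(\gamma)=\gamma\norm{\hat z_t(\gamma)-z_t}_2$ — the same bisection test, reparametrized — which turns the acceptance condition into a \emph{fixed} interval $\phi\in[\tfrac1{16},\tfrac18]$ straddling $\tfrac1{12}$, so the termination argument is pure additive slack ($\pm\tfrac1{48}$) with no side bound needed. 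Your derivative computation, with the observation that the explicit $\gamma$'s cancel after using $\norm{(\bI+\gamma A)^{-1}}_2\le(\gamma\sigma_{\min}(A))^{-1}$ from Assumption~\ref{assum:invert}, is a cleaner way to get a $\gamma$-uniform Lipschitz constant than the paper's Lemma~\ref{lem:lipschitz-q} (which bounds $|q'|$ by a more elaborate expression $C$). The one place you wave hands is the last line: you derive a sufficient iteration count $N\ge\log_2(48\,L_\phi(\gamma_+-\gamma_-))$ for \emph{your} Lipschitz constant $L_\phi$, but the algorithm fixes $N=O(\log(\bar C T/\delta))$ with the paper's $\bar C$, so you still owe a short calculation that $L_\phi \le \mathrm{poly}(\bar C, 1/\delta, T)$ (it does hold — e.g.\ substituting $\norm{F(z_t)}_2 = \sigma_{\min}(\nabla F(z_t))/(12\delta)$ into $L_\phi$ and into $C$ and comparing — but the chain ``plugging in $\ldots$ this is $O(\log(\bar C T/\delta))$'' needs to be spelled out to be airtight). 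The paper sidesteps this by using its own $C$ end to end. Everything else — the bracket invariant under $\gamma_\pm\!\leftarrow\!\bar\gamma$ updates, the halving, and the $\tfrac1{16}<\tfrac1{12}<\tfrac18$ slack check — is right.
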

\begin{proof}
By assumption, we have that $\gamma_-$ and $\gamma_+$ are initialized to be valid search bounds, i.e., $\gamma_- \leq \frac{1}{12\norme{\hat{z}_{t}(\gamma_-) - z_t}}$ and $\gamma_+ \geq \frac{1}{12\norme{\hat{z}_{t}(\gamma_+) - z_t}}$.
By Lemma \ref{lem:lipschitz-q} and letting $\bar{C} \defeq \max\braces{C,1}$, we know that, for all $x, y \geq \gamma_-$,
\begin{equation}\label{eq:lipschitz-q}
\abs{q(y) - q(x)} \leq \bar{C}\cdot\abs{y-x}
\end{equation} 
After $N = O\pa{\log\pa{\frac{\bar{C}T}{\delta}}}$ iterations of the binary search procedure we know that $\abs{\gamma_+ - \gamma_-} \leq \frac{\delta}{100\bar{C}} \leq \frac{\delta}{100}$, and so taken together with \eqref{eq:lipschitz-q}, we have
\begin{align*}
\gamma_+ &\leq \gamma_- + \frac{\delta}{100} \leq q(\gamma_-) + \frac{\delta}{100} \leq q(\gamma_+) + C\abs{\gamma_+ - \gamma_-} + \frac{\delta}{100} \leq q(\gamma_+) + \frac{2\delta}{100} \\
&\leq \frac{3}{2}q(\gamma_+) = \frac{1}{8\norme{\hat{z}_t(\gamma_+) - z_t}}.
\end{align*}
Here, the last inequality follows from the fact that, for $\gamma > 0$,
\begin{align}
q(\gamma) = \frac{1}{12\norme{(\gamma^{-1}\bI + \nabla F(z_t))^{-1}F(z_t)}} &\geq \frac{1}{12\norme{(\gamma^{-1}\bI + \nabla F(z_t))^{-1}}\cdot\norme{F(z_t)}}\nonumber\\ 
&\geq \frac{1}{12\norme{\nabla F(z_t)^{-1}}\cdot\norme{F(z_t)}}\nonumber\\
&= \frac{\sigma_{\min}(\nabla F(z_t))}{12\norme{F(z_t)}}\nonumber\\
&= \delta.\label{eq:q-lower-bound}
\end{align}
Thus, it follows that
 \begin{equation}
\frac{1}{16\norme{\hat{z}_t(\bar{\gamma}) - z_t}} \leq \bar{\gamma} \leq \frac{1}{8\norme{\hat{z}_t(\bar{\gamma}) - z_t}}
 \end{equation}
 for $\bar{\gamma} = \gamma_+$, as determined by Algorithm \ref{alg:gbs}.

\end{proof}

\begin{lemma}\label{lem:lipschitz-q}Let $q : \reals \mapsto \reals$ be defined as
\begin{equation}\label{eq:q}
q(\gamma) \defeq \frac{1}{12\gamma\norme{(\bI + \gamma \nabla F(z_t))^{-1}F(z_t)}} = \frac{1}{12\norme{(\gamma^{-1}\bI + \nabla F(z_t))^{-1}F(z_t)}},
\end{equation}
and let $\delta \defeq \frac{\sigma_{\min}(\nabla F(z_t))}{12\norme{F(z_t)}}$. Then, for all $\gamma \geq \delta$, we have
\begin{equation}
\abs{\frac{d}{d\gamma} q(\gamma)} \leq C,\quad \text{ for }\quad C \defeq \frac{1}{\delta^2}\pa{\frac{\frac{1}{\delta} + \norme{\nabla F(z_t)}}{12\sigma_{\min}(\nabla F(z_t))\norme{F(z_t)}}}^3.
\end{equation}
\end{lemma}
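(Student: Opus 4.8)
The plan is to prove Lemma~\ref{lem:lipschitz-q} by straightforward (if tedious) differentiation of the explicit formula for $q(\gamma)$, reducing the whole thing to bounding the derivative of the map $\gamma \mapsto \norme{(\gamma^{-1}\bI + \nabla F(z_t))^{-1} F(z_t)}$ and then keeping track of how small this quantity can get on the relevant range $\gamma \geq \delta$. Throughout I write $A \defeq \nabla F(z_t)$, $b \defeq F(z_t)$, and $M(\gamma) \defeq \gamma^{-1}\bI + A$, so that $q(\gamma) = \tfrac{1}{12\,\norme{M(\gamma)^{-1}b}}$; note $M(\gamma)$ is invertible for $\gamma > 0$ by Assumption~\ref{assum:invert}.

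First I would compute $\tfrac{d}{d\gamma} M(\gamma)^{-1} = -M(\gamma)^{-1}\,\bigl(\tfrac{d}{d\gamma}M(\gamma)\bigr)\,M(\gamma)^{-1} = \gamma^{-2}\,M(\gamma)^{-1}M(\gamma)^{-1}$, using $\tfrac{d}{d\gamma}M(\gamma) = -\gamma^{-2}\bI$. Hence, writing $v(\gamma) \defeq M(\gamma)^{-1}b$, we get $v'(\gamma) = \gamma^{-2} M(\gamma)^{-1} v(\gamma)$ and therefore $\bigl|\tfrac{d}{d\gamma}\norme{v(\gamma)}\bigr| \leq \norme{v'(\gamma)} \leq \gamma^{-2}\,\norme{M(\gamma)^{-1}}\cdot\norme{v(\gamma)}$. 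Since $q(\gamma) = \tfrac{1}{12\norme{v(\gamma)}}$, the chain rule gives $\bigl|\tfrac{d}{d\gamma}q(\gamma)\bigr| = \tfrac{1}{12\norme{v(\gamma)}^2}\,\bigl|\tfrac{d}{d\gamma}\norme{v(\gamma)}\bigr| \leq \tfrac{1}{12\norme{v(\gamma)}^2}\cdot\gamma^{-2}\norme{M(\gamma)^{-1}}\norme{v(\gamma)} = \tfrac{\gamma^{-2}\norme{M(\gamma)^{-1}}}{12\,\norme{v(\gamma)}}$. So everything comes down to upper-bounding $\norme{M(\gamma)^{-1}}$ and lower-bounding $\gamma^2 \norme{v(\gamma)}$ uniformly over $\gamma \geq \delta$.

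For the first: by Assumption~\ref{assum:invert}, $\sigma_{\min}(M(\gamma)) = \sigma_{\min}(\gamma^{-1}\bI + A) \geq \sigma_{\min}(A)$, so $\norme{M(\gamma)^{-1}} = 1/\sigma_{\min}(M(\gamma)) \leq 1/\sigma_{\min}(A) = \norme{A^{-1}}$ — but I also need a bound in terms of $\delta$ and $\norme{A}$ to match the stated constant, which I get from $\norme{M(\gamma)^{-1}} \leq 1/\sigma_{\min}(M(\gamma))$ together with $\sigma_{\min}(M(\gamma)) \geq \gamma^{-1} \cdot$(something) or, more directly, by noting that on $\gamma \geq \delta$ we have $\norme{M(\gamma)} \leq \gamma^{-1} + \norme{A} \leq \delta^{-1} + \norme{A}$ and $\norme{M(\gamma)^{-1}} \leq 1/\sigma_{\min}(A)$, giving a bound on $\norme{M(\gamma)^{-1}}$ of the form appearing inside the cube. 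For the second: $v(\gamma) = M(\gamma)^{-1} b$ so $\norme{v(\gamma)} \geq \norme{b}/\norme{M(\gamma)} \geq \norme{b}/(\delta^{-1} + \norme{A})$, and separately $q(\delta) = \tfrac{1}{12\norme{v(\delta)}} \geq \delta$ forces $\norme{v(\gamma)} \geq \tfrac{1}{12}\cdot\tfrac{\norme{b}/(\delta^{-1}+\norme{A})}{\cdots}$; combining the per-factor bounds $\gamma^{-2} \leq \delta^{-2}$, the bound on $\norme{M(\gamma)^{-1}}$, and the lower bound $\norme{v(\gamma)} \geq \tfrac{\norme{b}}{\delta^{-1}+\norme{A}}$ (and using $\delta = \sigma_{\min}(A)/(12\norme{b})$ to re-express the pieces) reproduces exactly the constant $C = \delta^{-2}\bigl(\tfrac{\delta^{-1}+\norme{\nabla F(z_t)}}{12\sigma_{\min}(\nabla F(z_t))\norme{F(z_t)}}\bigr)^3$ after chasing the algebra, with the cube arising from one factor of $\norme{M(\gamma)^{-1}}$ in the numerator and one factor of $1/\norme{v(\gamma)}$ each contributing a reciprocal of $\norme{M(\gamma)}$-type quantities.

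The main obstacle I anticipate is purely bookkeeping: matching the specific constant $C$ in the statement. The derivative bound $\bigl|\tfrac{d}{d\gamma}q(\gamma)\bigr| \leq \tfrac{\gamma^{-2}\norme{M(\gamma)^{-1}}}{12\norme{v(\gamma)}}$ is easy; the subtlety is that a naive substitution of $\norme{v(\gamma)} \geq \norme{b}/\norme{M(\gamma)}$ does not immediately give a cube, so one must be careful about which monotone bounds to apply in which factor — specifically, using $\norme{M(\gamma)^{-1}} \leq 1/\sigma_{\min}(A)$ once and using $1/\norme{v(\gamma)} \leq \norme{M(\gamma)}/\norme{b} \leq (\delta^{-1}+\norme{A})/\norme{b}$ combined with $\gamma^{-2}\leq\delta^{-2}$ and one more application of the same $1/\norme{v}$ bound hidden in the $q$-versus-$\norme{v}$ conversion. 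I would present the three elementary bounds ($\gamma^{-2}\leq\delta^{-2}$; $\norme{M(\gamma)^{-1}}$ small; $\norme{v(\gamma)}$ not too small) as displayed inequalities and then just multiply, noting that Assumption~\ref{assum:invert} is what makes the $\sigma_{\min}$ bound hold uniformly in $\gamma$.
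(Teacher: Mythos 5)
Your direct chain-rule derivation --- writing $q(\gamma) = \tfrac{1}{12\norme{v(\gamma)}}$ with $v(\gamma) = (\gamma^{-1}\bI + \nabla F(z_t))^{-1}F(z_t)$ and bounding $\abs{\tfrac{d}{d\gamma}\norme{v(\gamma)}}\le\norme{v'(\gamma)}\le\gamma^{-2}\norme{M(\gamma)^{-1}}\norme{v(\gamma)}$ --- is a genuinely different route from the paper's. The paper instead rewrites $q(\gamma) = \tfrac{1}{12}\bigl(F(z_t)^\top(\gamma^{-1}\bI + \nabla F(z_t)^\top)^{-1}(\gamma^{-1}\bI + \nabla F(z_t))^{-1}F(z_t)\bigr)^{-1/2}$, differentiates the quadratic form, and applies submultiplicativity to the resulting expression; this naturally produces a factor of $q(\gamma)^3\norme{M(\gamma)^{-1}}^3\norme{F(z_t)}^2$, which is where the cube in $C$ comes from. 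Your approach is more elementary, and your intermediate estimate $\abs{q'(\gamma)}\le\gamma^{-2}\norme{M(\gamma)^{-1}}\,q(\gamma)$ is correct and in fact tighter than the paper's; the three ingredient bounds ($\gamma^{-2}\le\delta^{-2}$, $\norme{M(\gamma)^{-1}}\le 1/\sigma_{\min}(\nabla F(z_t))$, $1/\norme{v(\gamma)}\le(\delta^{-1}+\norme{\nabla F(z_t)})/\norme{F(z_t)}$) are all valid under Assumption~\ref{assum:invert}.

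The gap is in the final step. Your claim that the algebra "reproduces exactly the constant $C$" is not correct, and the heuristic you offer for where the cube comes from cannot work: $q(\gamma) = \tfrac{1}{12\norme{v(\gamma)}}$ is an exact identity, not an inequality, so there is no extra reciprocal factor hiding in the "$q$-versus-$\norme{v}$ conversion." Multiplying your three bounds into $\abs{q'}\le\tfrac{\gamma^{-2}\norme{M(\gamma)^{-1}}}{12\norme{v(\gamma)}}$ yields
\begin{equation*}
\abs{q'(\gamma)} \;\le\; \frac{1}{\delta^2}\cdot\frac{\delta^{-1}+\norme{\nabla F(z_t)}}{12\,\sigma_{\min}(\nabla F(z_t))\,\norme{F(z_t)}},
\end{equation*}
which is the \emph{first} power of the bracketed quantity, not its cube. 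This is not dominated by $C$ in general: when $\sigma_{\min}(\nabla F(z_t))$ is large relative to $\norme{\nabla F(z_t)}$ and $\norme{F(z_t)}$, the bracketed quantity is below $1$, so your first-power bound strictly exceeds the stated $C$. Hence, as written, the proposal does not establish $\abs{q'}\le C$. (It is worth flagging that the paper's own derivation also does not land exactly on the printed $C$: after substituting $\abs{q}\le\tfrac{\delta^{-1}+\norme{\nabla F(z_t)}}{12\norme{F(z_t)}}$ into $\gamma^{-2}\abs{q}^3\norme{M(\gamma)^{-1}}^3\norme{F(z_t)}^2$, only one power of $\norme{F(z_t)}$ survives in the denominator rather than three, and there is a dropped scalar from differentiating $\tfrac{1}{12}\phi^{-1/2}$. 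So the printed $C$ may itself be off; but that is a separate issue, and if the lemma is to be proved as stated you would need an additional argument comparing your first-power constant to $C$, which fails in the regime above.)
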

\begin{proof}
We begin by rewriting $q(\gamma)$ as
\begin{align*}
q(\gamma) &= \frac{1}{12}\pa{F(z_t)^\top(\gamma^{-1}\bI + \nabla F(z_t))^{{-1}^\top} (\gamma^{-1}\bI + \nabla F(z_t))^{-1} F(z_t)}^{-1/2}\\
&= \frac{1}{12}\pa{F(z_t)^\top(\gamma^{-1}\bI + \nabla F(z_t)^\top)^{-1} (\gamma^{-1}\bI + \nabla F(z_t))^{-1} F(z_t)}^{-1/2}
\end{align*}
Now, let $M_1(\gamma) \defeq (\gamma^{-1}\bI + \nabla F(z_t)^\top)^{-1}$ and $M_2(\gamma) \defeq (\gamma^{-1}\bI + \nabla F(z_t))^{-1}$. By standard matrix calculus, we may observe that
\begin{equation}
\frac{d}{d\gamma} M_1(\gamma) = \frac{1}{\gamma^2}(\gamma^{-1}\bI + \nabla F(z_t)^\top)^{-2} \ \text{ and }\ \frac{d}{d\gamma} M_2(\gamma) = \frac{1}{\gamma^2}(\gamma^{-1}\bI + \nabla F(z_t))^{-2}.
\end{equation}

It follows that
\begin{align*}
\frac{d}{d\gamma} q(\gamma) &= -\frac{1}{2}q(\gamma)^3 \cdot \Bigg(F(z_t)^\top(\gamma^{-1}\bI + \nabla F(z_t)^\top)^{-1}\pa{\frac{d}{d\gamma} M_2(\gamma)}F(z_t)\\
&\qquad \qquad + F(z_t)^\top\pa{\frac{d}{d\gamma} M_1(\gamma)}(\gamma^{-1}\bI + \nabla F(z_t))^{-1} F(z_t)\Bigg)\\
&= -\frac{1}{2\gamma^2}q(\gamma)^3 \cdot \Bigg(F(z_t)^\top(\gamma^{-1}\bI + \nabla F(z_t)^\top)^{-1}(\gamma^{-1}\bI + \nabla F(z_t))^{-2}F(z_t)\\
&\qquad \qquad + F(z_t)^\top(\gamma^{-1}\bI + \nabla F(z_t)^\top)^{-2}(\gamma^{-1}\bI + \nabla F(z_t))^{-1} F(z_t)\Bigg).
\end{align*}

Now, by standard norm inequalities, we have
\begin{align*}
\abs{\frac{d}{d\gamma} q(\gamma)} &\leq \frac{1}{2\gamma^2}\abs{q(\gamma)}^3 \Big(\norme{(\gamma^{-1}\bI + \nabla F(z_t)^\top)^{-1}}\cdot\norme{(\gamma^{-1}\bI + \nabla F(z_t))^{-1}}^{2} \\&\qquad\qquad\qquad\quad+ \norme{(\gamma^{-1}\bI + \nabla F(z_t)^\top)^{-1}}^{2}\cdot\norme{(\gamma^{-1}\bI + \nabla F(z_t))^{-1}}\Big)\norme{F(z_t)}^2\\
& = \frac{1}{\gamma^2}\abs{q(\gamma)}^3 \cdot \norme{(\gamma^{-1}\bI + \nabla F(z_t))^{-1}}^{3}\cdot\norme{F(z_t)}^2.
\end{align*}
Note that for all $\gamma \geq \delta$,
\begin{equation}
\abs{q(\gamma)} = \frac{1}{12\norme{(\gamma^{-1}\bI + \nabla F(z_t))^{-1} F(z_t)}} \leq \frac{\gamma^{-1} + \norme{\nabla F(z_t)}}{\norme{F(z_t)}} \leq \frac{\frac{1}{\delta} + \norme{\nabla F(z_t)}}{12\norme{F(z_t)}}
\end{equation}
and 
\begin{equation}
\norme{(\gamma^{-1}\bI + \nabla F(z_t))^{-1}} = \frac{1}{\sigma_{\min}(\gamma^{-1}\bI + \nabla F(z_t))} \leq \frac{1}{\sigma_{\min}(\nabla F(z_t))},
\end{equation} where the final inequality follows by Assumption~\ref{assum:invert}. Taken together, this gives us that
\begin{equation}
\abs{\frac{d}{d\gamma} q(\gamma)} \leq \frac{1}{\delta^2}\pa{\frac{\frac{1}{\delta} + \norme{\nabla F(z_t)}}{12\sigma_{\min}(\nabla F(z_t))\norme{F(z_t)}}}^3,
\end{equation}
and so the lemma follows.
\end{proof}

\begin{algorithm}[h]
	\caption{\BSg}
	
	\begin{algorithmic}\label{alg:gbs}
		\STATE {\bfseries Input:} $z_t$, $0 < \eps < 1$, $\gamma_-^{\text{init}}$, $\gamma_+^{\text{init}}$.
		\STATE Initialize $\gamma_- \gets \gamma_-^{\text{init}}$, $\gamma_+ \gets \gamma_+^{\text{init}}$, $\bar{\gamma} \gets \frac{\gamma_- + \gamma_+}{2}$
		\STATE Set $\delta = \frac{\sigma_{\min}(\nabla F(z_t))}{12\norme{F(z_t)}}$, $C = \frac{1}{\delta^2}\pa{\frac{\frac{1}{\delta} + \norme{\nabla F(z_t)}}{12\sigma_{\min}(\nabla F(z_t))\norme{F(z_t)}}}^3$, $\bar{C} = \max\braces{C, 1}$, $N = O(\log(\frac{\bar{C}T}{\delta}))$.
		\STATE Define $\hat{z}_{t}(\gamma) \defeq z_t - \gamma(\bI + \gamma\nabla F(z_t))^{-1}F(z_t)$
		\FOR{$k=0$ {\bfseries to} $N-1$}
		\STATE $D = \frac{1}{12\norme{\hat{z}_t(\bar{\gamma}) - z_t}}$
		\IF{$\bar{\gamma} \leq D$}
		\STATE $\gamma_- \gets \bar{\gamma}$
		\ELSE
		\STATE $\gamma_+ \gets \bar{\gamma}$
		\ENDIF
		\STATE $\bar{\gamma} = \frac{\gamma_- + \gamma_+}{2}$
		\ENDFOR
		\RETURN $\bar{\gamma} \gets \gamma_+$
	\end{algorithmic}
\end{algorithm}

\subsection{Invertibility concerns}\label{sec:inversion}
While the general setting of Algorithm \ref{alg:fomp} assumes $(\bI + \nabla F(z_t))$ is invertible, it turns out that for convex-concave games, this assumption is not necessary. In particular, the Jacobian of the vector field \eqref{eq:saddlevf} is
\begin{equation}
\nabla F(x,y) =   \begin{bmatrix}
\nabla_{xx}^2 \phi(x,y) & \nabla_{xy}^2 \phi(x,y)   \\
-\nabla_{yx}^2 \phi(x,y) & -\nabla_{yy}^2 \phi(x,y)
\end{bmatrix}.
\end{equation}
Note that there is a natural decomposition of $\nabla F(x,y)$ as a sum of a symmetric and an anti-symmetric matrix, namely
\begin{equation}\label{eq:phi-decomp}
\nabla F(x,y) =   \begin{bmatrix}
\nabla_{xx}^2 \phi(x,y) & \bzero   \\
\bzero & -\nabla_{yy}^2 \phi(x,y)
\end{bmatrix} + \begin{bmatrix}
\bzero & \nabla_{xy}^2 \phi(x,y)   \\
-\nabla_{yx}^2 \phi(x,y) & \bzero
\end{bmatrix}.
\end{equation}
The following is a useful lemma about the real part of eigenvalues of matrices, based on such a symmetric-asymmetric decomposition.
\begin{lemma}\label{lem:sym-asym}
	Let $M$ be a real matrix such that $M = S + A$, where $S$ is a symmetric real matrix and $A$ is an antisymmetric real matrix. If $S$ is nonsingular, then $M$ is nonsingular. Likewise, if $S$ is positive definite (or PSD), then the real part of eigenvalues of $M$ are positive (or non-negative).
\end{lemma}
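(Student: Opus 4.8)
The plan is to reduce both assertions to a single elementary fact about the complex quadratic form $\overline{v}^{\top} M v$ for $v \in \C^n$. First I would record two observations. Since $S$ is real and symmetric, $\overline{v}^{\top} S v$ is a real number for every $v$: it equals its own conjugate, because $\overline{\overline{v}^{\top} S v} = \overline{v}^{\top} S^{\top} v = \overline{v}^{\top} S v$ (the first equality being scalar transposition together with $S$ real, the second being $S^{\top} = S$). Since $A$ is real and antisymmetric, the same manipulation gives $\overline{\overline{v}^{\top} A v} = \overline{v}^{\top} A^{\top} v = -\overline{v}^{\top} A v$, so $\overline{v}^{\top} A v$ is purely imaginary. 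Adding these, $\mathrm{Re}\big(\overline{v}^{\top} M v\big) = \overline{v}^{\top} S v$ for all $v \in \C^n$, which is strictly positive for $v \neq 0$ when $S$ is positive definite and non-negative when $S$ is PSD.

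For the eigenvalue claim, let $\lambda \in \C$ be an eigenvalue of $M$ with eigenvector $v \neq 0$. Left-multiplying $M v = \lambda v$ by $\overline{v}^{\top}$ gives $\overline{v}^{\top} M v = \lambda \norme{v}^{2}$; taking real parts and using the identity above yields $\mathrm{Re}(\lambda)\,\norme{v}^{2} = \overline{v}^{\top} S v$. Hence $\mathrm{Re}(\lambda) > 0$ if $S$ is positive definite and $\mathrm{Re}(\lambda) \ge 0$ if $S$ is PSD, which is exactly the second assertion.

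For the nonsingularity claim, observe that $M$ is singular iff $0$ is an eigenvalue of $M$, equivalently (since $M$ is real) iff $M v = 0$ for some real $v \neq 0$. When $S$ is definite this is impossible: if $M v = 0$ then $0 = v^{\top} M v = v^{\top} S v \neq 0$, a contradiction, and the negative-definite case follows by applying the argument to $-M = (-S) + (-A)$. This is the form in which the lemma is used later, where $S$ is the symmetric part of $\bI + \gamma \nabla F(z_t)$, which is positive definite whenever $\nabla F$ has PSD symmetric part, as in the convex-concave case. The one point worth flagging is that the first assertion genuinely needs $S$ to be definite rather than merely invertible: an indefinite but nonsingular $S$ can have an isotropic vector $v$ with $v^{\top} S v = 0$, in which case $M$ may be singular, so I would either sharpen the hypothesis to ``$S$ definite'' or add this caveat. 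Beyond that there is no real obstacle — the whole argument is the conjugate-transpose bookkeeping carried out in the first paragraph.
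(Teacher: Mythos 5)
Your proof is correct and follows the same essential strategy as the paper's: both compute the Hermitian quadratic form $v^* M v$ for an eigenvector $v$ and exploit that $v^* S v$ is real while $v^* A v$ is purely imaginary. The paper does this by expanding $v = v_r + i v_i$ into real and imaginary components and tracking terms; you keep the complex scalar throughout and take real/imaginary parts at the end. The bookkeeping is slightly cleaner in your version but the underlying identity is identical. Your separate, direct argument for the singularity claim (if $Mv = 0$ for real $v \neq 0$ then $v^\top S v = v^\top M v = 0$, contradicting definiteness, with the negative-definite case via $-M$) is also a modest simplification, since it avoids invoking the eigenvalue statement.

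More importantly, the caveat you flag is a genuine error in the lemma as stated, not a matter of taste. The assertion ``$S$ nonsingular implies $M$ nonsingular'' is false: take
\[
S = \begin{pmatrix} 1 & 0 \\ 0 & -1 \end{pmatrix}, \qquad A = \begin{pmatrix} 0 & 1 \\ -1 & 0 \end{pmatrix},
\]
so that $S$ is symmetric and nonsingular and $A$ is antisymmetric, yet $M = S + A = \left(\begin{smallmatrix} 1 & 1 \\ -1 & -1 \end{smallmatrix}\right)$ is singular. The paper's proof derives $\lambda_r \propto v_r^\top S v_r + v_i^\top S v_i$ and then asserts this ``implies the conclusions of the lemma,'' but nonsingularity of $S$ gives no sign control on that quantity when $S$ is indefinite, so the first conclusion does not follow. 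Your proposed repair --- strengthen the hypothesis to $S$ definite --- is the right one, and it costs nothing downstream: where the lemma is invoked, the symmetric part of $\bI + \gamma \nabla F(z_t)$ is $\bI$ plus a PSD matrix, hence positive definite.
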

\begin{proof}[Proof of Lemma~\ref{lem:sym-asym}]
	Let $v$ be an eigenvector of $M$ with eigenvalue $\lambda$ (these may both be complex). Let $v = v_r + iv_i$ and $\lambda = \lambda_r + i \lambda_i$ be the decompositions of $v$ and $\lambda$ into real and imaginary parts.
	\begin{align*}
	\lambda \norme{v} = v^* M v &= v^* S v + v^* A v\\
	&= (v_r - iv_i)^\top S (v_r + iv_i) + (v_r - iv_i)^\top A (v_r + iv_i)\\
	&= v_r^\top S v_r + v_i^\top S v_i + i(v_r^\top S v_i - v_i^\top S v_r) + v_r^\top A v_r + v_i^\top A v_i + i(v_r^\top A v_i - v_i^\top A v_r)
	\end{align*}
	Since $x^\top A x = 0$ for any antisymmetric matrix $A$, we have that $\lambda_r = \frac{1}{\norme{v}}(v_r^\top S v_r + v_i^\top S v_i)$, which implies the conclusions of the lemma. To see the fact about antisymmetric matrices, observe:
	\begin{align*}
	x^\top A x = x^\top A^\top x = -x^\top A x \iff 2x^\top A x = 0.
	\end{align*}
\end{proof}
By convexity and concavity of $\phi(x,y)$ in $x$ and $y$, respectively, we know that the symmetric part of \eqref{eq:phi-decomp} is PSD for all $z \in \Z$. It follows that, for all $t$, $(\bI + \nabla F(z_t))$ is positive definite, and therefore invertible. It may additionally be seen in this setting that $\sigma_{\min}(\gamma^{-1}\bI + \nabla F(z_t)) \geq \sigma_{\min}(\nabla F(z_t))$.

\bibliography{main}
\bibliographystyle{plainnat}

\appendix

\section{Proof of Lemma~\ref{lem:F_bound}}\label{app:F_bound}
\begin{lemma}\label{lem:F_bound}
	Assume $F$ is first-order $L_1$ smooth and $D(u,v) = \frac{1}{2}\norme{u-v}^2$.
\begin{align}
\norme{F(z_t)} \le 4\sqrt{t} L_1 \norme{z_1 - z^*}.
\end{align}
\end{lemma}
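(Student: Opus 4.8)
The plan is to reduce the bound on $\norme{F(z_t)}$ to a bound on the distance $\norme{z_t - z^*}$, and then to control that distance by a telescoping path-length argument. The first ingredient is that $F(z^*) = 0$: since $z^*$ solves the weak (Minty) variational inequality on all of $\reals^n$, for every direction $u$ and every $\varepsilon > 0$ we have $\inner{F(z^* + \varepsilon u), z^* - (z^*+\varepsilon u)} = \inner{F(z^*+\varepsilon u), -\varepsilon u} \le 0$, i.e.\ $\inner{F(z^*+\varepsilon u), u} \ge 0$; letting $\varepsilon \to 0^+$ and using that first-order $L_1$-smoothness makes $F$ continuous gives $\inner{F(z^*), u} \ge 0$ for all $u$, hence $F(z^*) = 0$. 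Consequently, by first-order smoothness, $\norme{F(z_t)} = \norme{F(z_t) - F(z^*)} \le L_1 \norme{z_t - z^*} \le L_1\big(\norme{z_t - z_1} + \norme{z_1 - z^*}\big)$, so it suffices to prove $\norme{z_t - z_1} \le 2\sqrt{2t}\,\norme{z_1 - z^*}$.

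For the path-length bound I would telescope along the iterates of Algorithm~\ref{alg:fomp} (which is the $p=2$, $\Z = \reals^n$, $D = \tfrac12\norme{\cdot}^2$ instantiation of Algorithm~\ref{alg:main}, so Lemma~\ref{lem:higher-order_mp} applies, and in fact applies to every prefix $s = 1,\dots,m$ since inequality~\eqref{eq:term_to_sum} is established per iteration). Writing $\norme{z_t - z_1} \le \sum_{s=1}^{t-1}\norme{z_{s+1} - z_s} \le \sum_{s=1}^{t-1}\big(\norme{z_{s+1} - \hat{z}_s} + \norme{\hat{z}_s - z_s}\big)$, evaluating Lemma~\ref{lem:higher-order_mp} at $z = z^*$, and using $\inner{\gamma_s F(\hat{z}_s), \hat{z}_s - z^*} \ge 0$ (a restatement of Assumption~\ref{as:mvi-soln}, since $\gamma_s > 0$) together with non-negativity of the Bregman divergence, yields
\[
\tfrac14 \sum_{s=1}^{t-1}\norme{\hat{z}_s - z_s}^2 + \tfrac14\sum_{s=1}^{t-1}\norme{z_{s+1} - \hat{z}_s}^2 \le D(z^*, z_1) = \tfrac12\norme{z^* - z_1}^2 .
\]
Cauchy--Schwarz then gives $\sum_{s=1}^{t-1}\norme{\hat{z}_s - z_s} \le \sqrt{t-1}\,\big(\sum_{s=1}^{t-1}\norme{\hat{z}_s - z_s}^2\big)^{1/2} \le \sqrt{2(t-1)}\,\norme{z_1 - z^*}$, and identically for $\sum_{s=1}^{t-1}\norme{z_{s+1} - \hat{z}_s}$, so $\norme{z_t - z_1} \le 2\sqrt{2(t-1)}\,\norme{z_1 - z^*} \le 2\sqrt{2t}\,\norme{z_1 - z^*}$.

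Combining the two parts, $\norme{z_t - z^*} \le (2\sqrt{2t} + 1)\norme{z_1 - z^*} \le 4\sqrt{t}\,\norme{z_1 - z^*}$ for $t \ge 1$ (using $2\sqrt{2} + 1 < 4$ and $1 \le \sqrt{t}$), which gives $\norme{F(z_t)} \le 4\sqrt{t}\,L_1 \norme{z_1 - z^*}$ as claimed. I do not expect a serious obstacle; the two points needing care are (i) pinning down $F(z^*) = 0$, which is what anchors $F$ at a known value so that the Lipschitz bound becomes useful, and (ii) getting the $\sqrt{t}$ (rather than linear) growth of the path length, which relies on applying Cauchy--Schwarz to the squared-step sums that Lemma~\ref{lem:higher-order_mp} controls at $z = z^*$ — one should also double-check the orientation of the inner-product inequality and that the prefix version of Lemma~\ref{lem:higher-order_mp} is legitimate.
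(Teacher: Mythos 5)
Your proposal is correct and follows essentially the same route as the paper: establish $F(z^*)=0$ from the weak--strong MVI equivalence, convert to a bound on $\norme{z_t-z^*}$ via $L_1$-smoothness, and then control that distance by evaluating Lemma~\ref{lem:higher-order_mp} at $z=z^*$ to bound the accumulated squared step lengths, finishing with a Cauchy--Schwarz (power-mean) path-length argument. The only cosmetic difference is that you telescope directly along the $z_s \to \hat z_s \to z_{s+1}$ path and apply Cauchy--Schwarz to each of the two sums, whereas the paper first bounds $\sum_t \norme{z_{t+1}-z_t}^2$ (Lemma~\ref{lem:bounded_iters}) and then invokes the power-mean inequality once on the $(t{+}1)$-term sum including $\norme{z_1-v}$ (Lemmas~\ref{lem:simple_a} and \ref{lem:ind_bounded_iters}); both yield constants below $4$.
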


To prove Lemma~\ref{lem:F_bound}, we need the following lemma, which we prove in \Cref{app:ind_bounded_iters}.
\begin{lemma}\label{lem:ind_bounded_iters}
	Suppose $F:\R^n \ra \R^n$ is $p^{th}$-order $L_p$-smooth.
	Let $\{z_t\}_{t=1}^T$ be the iterates generated by \Cref{alg:main} and let $z^*$ be a solution to MVI($F,\Z$). Then for any $v\in \Z$,
	\begin{align}
		\norme{z_t - v}^2 \le 2t\paren{8D(z_1,z^*) + \norme{z_1 - v}^2}.
	\end{align}	
\end{lemma}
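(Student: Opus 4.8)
The plan is to feed the solution $z^*$ into the per-iteration Mirror--Prox inequality and then convert the resulting control on the step magnitudes into a growth estimate for the iterates. Recall from the proof of Lemma~\ref{lem:higher-order_mp} (specifically \eqref{eq:term_to_sum}) that for every $t$ and every $z\in\Z$,
\[
\gamma_t\inner{F(\hat z_t),\hat z_t - z} + \frac14\norme{\hat z_t - z_t}^2 + \frac14\norme{z_{t+1}-\hat z_t}^2 \le D(z,z_t) - D(z,z_{t+1}),
\]
where we specialize to the Euclidean distance generating function $D(u,v)=\frac12\norme{u-v}^2$ used in \Cref{sec:explicit}. First I would set $z=z^*$. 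Since $\hat z_t\in\Z$ (it is the argmin over $\Z$ in Algorithm~\ref{alg:main}) and $z^*$ solves the weak MVI \eqref{eq:VI}, we have $\inner{F(\hat z_t),z^*-\hat z_t}\le 0$, so $\gamma_t\inner{F(\hat z_t),\hat z_t - z^*}\ge 0$ and this term may be dropped. Summing the remaining inequality over $s=1,\dots,t$ telescopes the Bregman terms, and using $D(z^*,z_{t+1})\ge 0$ gives
\[
\sum_{s=1}^{t}\paren{\norme{\hat z_s - z_s}^2 + \norme{z_{s+1}-\hat z_s}^2}\le 4\,D(z^*,z_1)=4\,D(z_1,z^*).
\]

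Next I would translate this into a bound on $\norme{z_t-v}$. Abbreviate $a_s=\norme{\hat z_s-z_s}$ and $b_s=\norme{z_{s+1}-\hat z_s}$. For any fixed $v\in\Z$ the triangle inequality gives $\norme{z_{s+1}-v}\le\norme{z_s-v}+a_s+b_s$, and telescoping yields $\norme{z_{t+1}-v}\le\norme{z_1-v}+\sum_{s=1}^t(a_s+b_s)$. Squaring, using $(x+y)^2\le 2x^2+2y^2$, and then applying Cauchy--Schwarz to the sum of the $2t$ nonnegative numbers $a_1,b_1,\dots,a_t,b_t$,
\[
\norme{z_{t+1}-v}^2 \le 2\norme{z_1-v}^2 + 2\cdot 2t\sum_{s=1}^{t}\paren{a_s^2+b_s^2} \le 2\norme{z_1-v}^2 + 16\,t\,D(z_1,z^*).
\]
Shifting the index ($t\mapsto t-1$; the case $t=1$ is immediate from $\norme{z_1-v}^2\le 2\norme{z_1-v}^2$) and using the crude bounds $2\le 2t$ and $16(t-1)\le 16t$ valid for $t\ge 1$ gives $\norme{z_t-v}^2\le 2t\norme{z_1-v}^2+16\,t\,D(z_1,z^*)=2t\paren{8D(z_1,z^*)+\norme{z_1-v}^2}$, which is the claim.

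The argument is elementary once Lemma~\ref{lem:higher-order_mp} is in hand, so there is no deep obstacle; the points requiring care are: (i) the sign $\gamma_t\inner{F(\hat z_t),\hat z_t-z^*}\ge 0$ must be obtained from the weak MVI property alone, since monotonicity of $F$ is not hypothesized in this lemma; (ii) Cauchy--Schwarz must be applied to $2t$ summands, not $t$ — this is exactly what produces the factor $2t$ in the statement, so matching the constants is the one spot where the bookkeeping matters; and (iii) the off-by-one shift between $z_{t+1}$ and $z_t$, absorbed by the slack $2\le 2t$ and $16(t-1)\le 16t$. For a general distance generating function $d$ that is $1$-strongly convex with respect to $\norme{\cdot}$ the same proof goes through with $D(z^*,z_1)$ in place of $D(z_1,z^*)$; the two coincide for the Euclidean choice of \Cref{sec:explicit}, which is where this lemma is invoked (in the proof of Lemma~\ref{lem:F_bound}).
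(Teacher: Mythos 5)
Your proof is correct and follows essentially the same route as the paper's: both drop the $\gamma_t\inner{F(\hat z_t),\hat z_t - z^*}$ term using the existence of an MVI solution (Assumption~\ref{as:mvi-soln}), telescope the Bregman terms from the per-iteration inequality \eqref{eq:term_to_sum} of Lemma~\ref{lem:higher-order_mp} to bound the accumulated squared step sizes, and then combine the triangle inequality with a Cauchy--Schwarz/power-mean argument to convert that into a growth bound on $\norme{z_t - v}$. The only difference is organizational: the paper first aggregates $\norme{\hat z_t - z_t}$ and $\norme{z_{t+1}-\hat z_t}$ into $\norme{z_{t+1}-z_t}$ via Lemma~\ref{lem:bounded_iters} before applying the power-mean inequality (Lemma~\ref{lem:simple_a}) to $t+1$ terms, whereas you keep $a_s, b_s$ separate and apply Cauchy--Schwarz to $2t$ terms after first splitting off $\norme{z_1-v}$ with $(x+y)^2\le 2x^2+2y^2$; both bookkeeping choices land on the same constant $2t(8D+\norme{z_1-v}^2)$.
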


\begin{proof}[Proof of Lemma~\ref{lem:F_bound}]
By Assumption~\ref{as:mvi-soln}, we know there exists a $z^*$ such that \cref{eq:VI} holds. By Lemma~\ref{lem:VI_equal}, any such $z^*$ is also a solution to \cref{eq:strong_VI}, namely:
\begin{align}
	\forall z\in R^n, \inner{F(z^*), z^* - z} \le 0.
\end{align}
Since we are in the unconstrained setting, this implies that $F(z^*)=0$. Then we have:
\begin{align}
\norme{F(z_t)} = \norme{F(z_t) - F(z^*)} \le L_1 \norme{z_t - z^*}\label{eq:almost_f_bound}
\end{align}
where the inequality follows by the $L_1$ smoothness of $F$. By Lemma~\ref{lem:ind_bounded_iters}, we have
\begin{align}
\norme{z_t - z^*}\le \sqrt{2t\paren{4\norme{z_1 - z^*}^2 + \norme{z_1 - z^*}^2}} \le 4\sqrt{t}\norme{z_1 - z^*}.
\end{align}
Combining this with \cref{eq:almost_f_bound} gives the result.
\end{proof}

\subsection{Proof of Lemma~\ref{lem:ind_bounded_iters}}\label{app:ind_bounded_iters}
We will need the following two lemmas to prove Lemma~\ref{lem:ind_bounded_iters}:
	\begin{lemma}\label{lem:simple_a}
	Let $a_i > 0$ for $i \in [n]$. Then,
	\begin{align}
		\paren{\sum_{i=1}^n a_i}^2 \le n\sum_{i=1}^n a_i^2.
	\end{align}
\end{lemma}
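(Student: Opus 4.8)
\textbf{Proof proposal for Lemma~\ref{lem:simple_a}.} This is a direct consequence of the Cauchy--Schwarz inequality, so the plan is short. I would apply Cauchy--Schwarz to the vectors $(a_1,\dots,a_n)$ and $(1,\dots,1)$ in $\reals^n$, which gives
\[
\pa{\sum_{i=1}^n a_i}^2 = \pa{\sum_{i=1}^n a_i \cdot 1}^2 \le \pa{\sum_{i=1}^n a_i^2}\pa{\sum_{i=1}^n 1^2} = n\sum_{i=1}^n a_i^2.
\]
Note the positivity hypothesis $a_i > 0$ is not even needed; the inequality holds for all real $a_i$, and indeed equality holds exactly when all $a_i$ are equal.

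An alternative route, if one prefers not to invoke Cauchy--Schwarz, is to use convexity of $x \mapsto x^2$ together with Jensen's inequality: $\pa{\frac{1}{n}\sum_{i=1}^n a_i}^2 \le \frac{1}{n}\sum_{i=1}^n a_i^2$, and then multiply both sides by $n^2$. Either way the argument is a single line.

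There is no real obstacle here; the only thing to be careful about is simply citing the correct classical inequality and matching the exact form of the bound as stated (with the factor $n$ on the right-hand side, corresponding to $\|\mathbf{1}\|_2^2 = n$).
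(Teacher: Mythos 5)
Your proof is correct and is essentially the same argument as the paper's, just phrased differently: the paper invokes the power mean inequality $M_1(a) \le M_2(a)$, which is precisely the Jensen/convexity route you mention as an alternative, and which is in turn equivalent to your Cauchy--Schwarz step with the all-ones vector. Either phrasing is fine; the paper's choice of power means is presumably for consistency with the similar power-mean argument in its Lemma~\ref{lem:apply_power_mean}.
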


\begin{lemma}\label{lem:bounded_iters}
	Let $z^*$ be the solution to MVI$(F,\Z)$. Then for the iterates $z_t$ of Algorithm \ref{alg:main} initialized at $z_1$, we have:
	\begin{align}
		\frac{1}{8}\sum\limits_{t=1}^T \norme{z_{t+1} - z_t}^2 \leq D(z^*,z_1).
	\end{align}
\end{lemma}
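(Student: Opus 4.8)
The plan is to obtain the bound as a short corollary of the main technical lemma (Lemma~\ref{lem:higher-order_mp}), specialized to $z = z^*$. First I would note that since $z^*$ solves MVI$(F,\Z)$, taking $z = \hat{z}_t$ in \cref{eq:VI} gives $\inner{F(\hat{z}_t), z^* - \hat{z}_t} \le 0$, hence $\gamma_t \inner{F(\hat{z}_t), \hat{z}_t - z^*} \ge 0$ for every $t$ (as $\gamma_t > 0$). Summing inequality~\eqref{eq:term_to_sum} over $t = 1,\dots,T$ at $z = z^*$ — which is exactly the statement of Lemma~\ref{lem:higher-order_mp} at $z = z^*$ — then dropping this nonnegative sum from the left-hand side and using $D(z^*, z_{T+1}) \ge 0$ on the right, leaves
\[
\tfrac14 \sum_{t=1}^T \norme{\hat{z}_t - z_t}^2 + \tfrac14 \sum_{t=1}^T \norme{z_{t+1} - \hat{z}_t}^2 \le D(z^*, z_1).
\]

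Second, I would pass from these two ``half-step'' distances to the full-step distance $\norme{z_{t+1} - z_t}$ via the triangle inequality together with $(a+b)^2 \le 2a^2 + 2b^2$, giving $\norme{z_{t+1} - z_t}^2 \le 2\norme{z_{t+1} - \hat{z}_t}^2 + 2\norme{\hat{z}_t - z_t}^2$, i.e.\ $\tfrac18 \norme{z_{t+1} - z_t}^2 \le \tfrac14 \norme{z_{t+1} - \hat{z}_t}^2 + \tfrac14 \norme{\hat{z}_t - z_t}^2$. Summing over $t$ and combining with the display above yields $\tfrac18 \sum_{t=1}^T \norme{z_{t+1} - z_t}^2 \le D(z^*, z_1)$, which is exactly the claim.

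There is no real obstacle here; this is essentially the same maneuver used to derive the bound $\tfrac14 \sum_t \norme{\hat{z}_t - z_t}^2 \le D(z^*, z_1)$ inside the proof of Lemma~\ref{thm:mainho}. The only points requiring a little care are (i) keeping the quadratic proximal terms of the Mirror-Prox inequality rather than discarding them, since those are precisely what we want to control; and (ii) using Assumption~\ref{as:mvi-soln} both to guarantee the existence of $z^*$ and to sign the inner-product terms so that they can be dropped.
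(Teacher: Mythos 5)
Your proof is correct and follows essentially the same route as the paper's: both invoke Lemma~\ref{lem:higher-order_mp} at $z = z^*$, use the non-negativity of $\gamma_t\inner{F(\hat z_t),\hat z_t - z^*}$ (a consequence of Assumption~\ref{as:mvi-soln}), and then combine the two retained quadratic terms via $\norme{z_{t+1}-z_t}^2 \le 2\norme{z_{t+1}-\hat z_t}^2 + 2\norme{\hat z_t - z_t}^2$. If anything, you spell out the sign argument more explicitly than the paper does.
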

We prove in Lemma~\ref{lem:simple_a} in \Cref{app:simple_a}, and we prove Lemma~\ref{lem:bounded_iters} in \Cref{app:bounded_iters}.

\begin{proof}[Proof of Lemma~\ref{lem:ind_bounded_iters}]
	By the triangle inequality, we have:
	\begin{align}
		\norme{z_t - v}^2 &\le \paren{\sum_{\tau=1}^t \norme{z_\tau - z_{\tau+1}} + \norme{z_1 - v}}^2\\
		&\le (t+1)\paren{\sum_{\tau=1}^t  \norme{z_\tau - z_{\tau+1}}^2 + \norme{z_1 - v}^2}\label{eq:two_sums}
	\end{align}
	where the second inequality follows from using Lemma~\ref{lem:simple_a} with $a_i = \norme{z_i - z_{i+1}}$ for $i\in [t]$ and $a_{t+1} = \norme{z_1 - v}$. We then apply Lemma~\ref{lem:bounded_iters} to \cref{eq:two_sums} to get the result, using the fact that $t+1 \le 2t$.
\end{proof}

\subsection{Proof of Lemma~\ref{lem:simple_a}}\label{app:simple_a}
 \begin{proof}[Proof of Lemma~\ref{lem:simple_a}]
 	Let $a$ be the vector of $a_i$'s. We define the following power means:
 	\begin{align}
 		M_1(a) &= \frac{\sum_{i=1}^n a_i}{n}\\
 		M_2(a) &= \paren{\frac{\sum_{i=1}^n a_i^2}{n}}^{1/2}.
 	\end{align}
 	By the power mean inequality, we have $M_1(a) \le M_2(a)$, which gives the result.
 \end{proof}

\subsection{Proof of Lemma~\ref{lem:bounded_iters}}\label{app:bounded_iters}
\begin{proof}[Proof of Lemma~\ref{lem:bounded_iters}]
	This follows from two simple observations. First, note that:
	\begin{align}\label{eq:distance_squared}
		\sum\limits_{t=1}^T\norme{z_{t+1} - z_t}^2 \le \sum\limits_{t=1}^T (2\norme{z_{t+1} - \hat{z}_t}^2 + 2\norme{\hat{z}_t-z_t}^2).
	\end{align}
	Now, by Assumption~\ref{as:mvi-soln}, we know that each term of $\sum\limits_{t=1}^T \inner{\gamma_t F(\hat{z}_t), \hat{z}_t - z}$ is non-negative for some $z^* \in \Z$, namely the solution to MVI$(F,\Z)$. Combining this with
	Lemma~\ref{lem:higher-order_mp} and \cref{eq:distance_squared} gives the result.
\end{proof}

\section{Equivalence of exact solutions to weak and strong MVIs}\label{app:weak_VI}
\begin{lemma}[\cite{kinderlehrer1980introduction}] \label{lem:VI_equal}
For continuous $F:\R^n \ra \R^n$, any solution of \cref{eq:VI} is a solution to \cref{eq:strong_VI}.
\end{lemma}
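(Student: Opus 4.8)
The plan is to use the classical line-segment (homotopy) argument that converts a Minty-type inequality into a Stampacchia-type one. Let $z^*$ be any solution of \cref{eq:VI}. I would fix an arbitrary $z \in \R^n$ and, for each $t \in (0,1]$, introduce the interpolated point $z_t \defeq z^* + t(z - z^*)$; since the ambient set here is all of $\R^n$ (and, more generally, any convex $\Z$), $z_t$ is a legitimate test point.

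The first step is to apply the weak MVI inequality \cref{eq:VI} with the test point $z_t$, obtaining $\inner{F(z_t), z^* - z_t} \le 0$. The second step is the algebraic simplification $z^* - z_t = -t(z - z^*) = t(z^* - z)$, which turns the previous inequality into $t\,\inner{F(z_t), z^* - z} \le 0$; dividing through by $t > 0$ gives $\inner{F(z_t), z^* - z} \le 0$ for every $t \in (0,1]$.

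The final step is to let $t \to 0^+$: then $z_t \to z^*$, so continuity of $F$ gives $F(z_t) \to F(z^*)$, and continuity of the inner product yields $\inner{F(z^*), z^* - z} \le 0$. Since $z$ was arbitrary, $z^*$ solves \cref{eq:strong_VI}. I do not anticipate a genuine obstacle here; the only two points needing a word of justification are that the perturbed points $z_t$ remain in the domain (trivial for $\R^n$, and immediate from convexity in the constrained formulation) and that the passage to the limit is valid — which is exactly where, and the only place where, the continuity hypothesis on $F$ is used.
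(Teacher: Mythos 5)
Your proposal is correct and takes essentially the same approach as the paper: both plug the interpolated point $z^* + t(z - z^*)$ into the Minty inequality, divide by $t>0$, and pass to the limit $t\to 0^+$ using continuity of $F$.
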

\begin{proof}
	Let $z^*$ be a solution to \cref{eq:VI}. Let $z = z^* + t(v-z^*)$ for an arbitrary $v\in \Z$ and for $t> 0$. Then:
	\begin{align}
		\inner{F(z^* + t(v-z^*)), -t(v-z^*)} &\le 0\\
		\iff \inner{F(z^* + t(v-z^*)), z^* - v)} &\le 0 \label{eq:to_limit}.
	\end{align}
	Taking the limit of \cref{eq:to_limit} as $t$ goes to $0$ gives:
	\begin{align}
		\inner{F(z^*),z^* - v} \le 0.
	\end{align}
Thus, $z^*$ is a solution to \cref{eq:strong_VI}.
\end{proof}

\end{document}